\newtheorem{theorem}{Theorem}[section]
\newtheorem{corollary}{Corollary}[theorem]
\newtheorem{lemma}[theorem]{Lemma}
\numberwithin{equation}{section}
\begin{document}

\title[Shape sensitivity of the Hardy constant]{Shape sensitivity of the Hardy constant involving the distance from a boundary submanifold}

\author{M. Paschalis}
\address{Department of Mathematics, University of Athens}

\email{mpaschal@math.uoa.gr}
\thanks{This research was supported by the Hellenic Foundation for Research and Innovation (HFRI) under the HFRI PhD Fellowship grant (Fellowship Number 1250)}

\subjclass[2000]{26D15; 35P15}

\keywords{Hardy inequality, Best constant, Domain perturbation, Submanifold}

\begin{abstract}
We investigate the continuity and differentiability of the Hardy constant with respect to perturbations of the domain in the case where the problem involves the distance from a boundary submanifold. We also investigate the case where only the submanifold is deformed.
\end{abstract}

\maketitle

\section{Introduction}

Suppose $\Omega \subset \mathbb{R}^n$ is a bounded domain (open, connected) with boundary $\partial \Omega$, and let $\Sigma \subset \partial \Omega$ be a submanifold of the boundary of dimension $\dim \Sigma = s \in \{ 0,\ldots, n-1 \}$. If there exists a positive constant $C>0$ such that the inequality
\begin{equation}
\int_\Omega |\nabla u|^2 dx \geq C \int_\Omega \frac{u^2}{d_\Sigma^2} dx, \ \ \ u\in H^1_0(\Omega),
\end{equation}
with $d_\Sigma = \textrm{dist}(\cdot,\Sigma)$ is valid, we say that the Hardy inequality is satisfied for the pair $(\Omega,\Sigma)$. Such inequalities are known to hold in a variety of settings, and for the particular cases $s=0$ (point singularities) and $s=n-1$ (the entire boundary) the relevant literature is quite extensive (especially for the later case).

An important aspect of the theory of Hardy inequalities is to specify the best constant for a particular pair $(\Omega,\Sigma)$, that is, the quantity
\begin{equation}
H(\Omega,\Sigma) = \inf_{u\in H^1_0(\Omega)} \frac{\int_\Omega |\nabla u|^2 dx}{\int_\Omega \frac{u^2}{d_\Sigma^2} dx},
\end{equation}
which is called the Hardy constant.

The particular case $s=n-1$ has been studied extensively and, apart from the well known  convexity condition, other conditions have been introduced which guarantee that the Hardy constant assumes the critical value $1/4$, see \cite{D, MMP, A, G, BFT1} and references therein.

The case $s=0$ has also been studied in recent years. Sufficient conditions to have $H(\Omega,\{ \sigma \})= n^2/4$ (again, the critical value) have been obtained in \cite{BFT2, F}, see also \cite{C, CV, FMu} for results within this context. For the intermediate dimension case see \cite{FM}.

In this paper, we are primarily concerned with the behaviour of this constant under perturbations of the domain and the submanifold. In particular, if $\varphi$ is a diffeomorphism, we get a map
\begin{equation}
\varphi \longmapsto H(\varphi(\Omega),\varphi(\Sigma)),
\end{equation}
and our task is to investigate questions of continuity and differentiability of that map in an appropriate sense which is made precise in the next section. This problem has already been studied in a more general $L^p$ setting for the special case $\Sigma = \partial \Omega$ in \cite{BL}, so our work here is a natural continuation of that work.

We also concern ourselves with the problem where only the submanifold is perturbed. This is expressed in a very neat way in the case of a point singularity: if we regard the Hardy constant as a function $H:\partial \Omega \rightarrow \mathbb{R}$, $$H(\sigma) = H(\Omega, \{ \sigma \}),$$ then this function is differentiable on $\partial \Omega$, under some reasonable assumptions.

\section{Diffeomorphism Groups}

In this section we offer a quick review of finite order diffeomorphism groups in $\mathbb{R}^n$. For details, see \cite{Ba}. A $C^k$-diffeomorphism of $\mathbb{R}^n$ is a homeomorphism $\varphi: \mathbb{R}^n \rightarrow \mathbb{R}^n$ that is $k$-times bi-differentiable. The set of all such maps is denoted by $Diff^k(\mathbb{R}^n)$. It is obviously a group under composition. For our purposes, it is sufficient to work with the subgroup $Diff^k_c(\mathbb{R}^n)$ of $C^k$-diffeomorphisms with compact support (the support of a diffeomorphism is defined to be the closure of the set of points that the diffeomorphism acts upon non-trivially). Since we work on bounded domains, this is done without loss of generality, and spares us some technical considerations that are consequence of the non-compactness of $\mathbb{R}^n$.

We now equip $Diff^k_c(\mathbb{R}^n)$ with the weak $C^k$ topology (or compact-open topology). To describe this topology, it suffices to describe the basic open sets that generate it. These are the ``balls'' $$\mathcal{N}_\varphi(K,\epsilon) = \{ \psi \in Diff_c^k(\mathbb{R}^n): \| \psi - \varphi \|_{C^k(K)} < \epsilon \}$$ of center $\varphi \in Diff^k_c(\mathbb{R}^n)$, radius $\epsilon>0$ and domain $K$, which is a compact subset of $\mathbb{R}^n$. Here, we assume $$\| \varphi \|_{C^k(K)} = \sum_{0\leq |\alpha| \leq k} \| \partial^\alpha \varphi \|_{L^{\infty}(K)}.$$ In this topology, $Diff^k_c(\mathbb{R}^n)$ is a topological group, which is in fact locally homeomorphic to the Banach space of $C^k$ vector fields of compact support $\mathfrak{X}^k_c(\mathbb{R}^n) \cong C^k_c(\mathbb{R}^n,\mathbb{R}^n)$, thus assuming the structure of an infinite dimensional Lie group.

The directional derivative of a continuous function $H:Diff^k_c(\mathbb{R}^n) \rightarrow \mathbb{R}$ at $\varphi \in Diff^k_c(\mathbb{R}^n)$ in the direction of $\xi \in \mathfrak{X}^k_c(\mathbb{R}^n)$ is given by the limit $$D_\varphi H (\xi) = \frac{d}{dt} \bigg|_{t=0} H(\varphi+t\xi),$$ provided it exists. Note that the compact support assumption guarantees that $\varphi + t\xi$ is always a diffeomorphism provided that $t$ is small enough. If this is defined for all $\varphi \in Diff^k_c(\mathbb{R}^n)$ and all $\xi \in \mathfrak{X}^k_c(\mathbb{R}^n)$, we say that $H$ is (Gateaux) differentiable.

\section{Continuity of the Hardy Constant}

Here we discuss some continuity results. By $co(\Omega)$ we denote the convex hull of $\Omega$.

\begin{theorem}
Let $\Omega \subset \mathbb{R}^n$ be an open set with non-empty boundary, and let $\Sigma \subset \partial \Omega$ be an arbitrary subset of the boundary. Then there exist $\epsilon>0$ and $c>0$ such that for every $C^1$ diffeomorphism $\varphi$ with $\|D\varphi-I \|<\epsilon$,
\begin{equation}
\label{cont}
|H(\varphi(\Omega),\varphi(\Sigma)) - H(\Omega, \Sigma)| \leq c H(\Omega,\Sigma) \| D\varphi-I \|_{L^{\infty}(co_\varphi(\Omega))},
\end{equation}
where $co_\varphi(\Omega)=co(\Omega)\cup \varphi^{-1}(co(\varphi(\Omega)))$.
\end{theorem}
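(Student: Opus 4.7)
\medskip

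\noindent\textbf{Proof plan.} The plan is to use the diffeomorphism $\varphi$ as a change of variables to push test functions between $H^1_0(\Omega)$ and $H^1_0(\varphi(\Omega))$, compare the two Rayleigh quotients, and extract an estimate that is linear in $\eta := \|D\varphi - I\|_{L^\infty(co_\varphi(\Omega))}$. Given $u \in H^1_0(\Omega)$, set $v = u \circ \varphi^{-1} \in H^1_0(\varphi(\Omega))$, so that $\nabla v \circ \varphi = (D\varphi)^{-T} \nabla u$ and
\begin{equation*}
\int_{\varphi(\Omega)} |\nabla v|^2 dy = \int_\Omega |(D\varphi)^{-T} \nabla u|^2 |\det D\varphi|\, dx, \quad \int_{\varphi(\Omega)} \frac{v^2}{d_{\varphi(\Sigma)}^2} dy = \int_\Omega \frac{u^2 |\det D\varphi|}{d_{\varphi(\Sigma)}^2(\varphi(x))} dx.
\end{equation*}
Standard Neumann series estimates give $|(D\varphi)^{-T}\nabla u|^2 \leq (1 + C\eta)|\nabla u|^2$ and $|\det D\varphi| = 1 + O(\eta)$ pointwise on $\Omega$ for $\eta$ small.

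\medskip

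\noindent The technical core is the two-sided comparison of distance functions. For $x \in \Omega$ and $\sigma \in \Sigma$, the segment $[\sigma,x]$ lies in $co(\Omega) \subset co_\varphi(\Omega)$, and the fundamental theorem of calculus applied to $t \mapsto \varphi(\sigma + t(x-\sigma))$ yields
\begin{equation*}
\bigl|\varphi(x) - \varphi(\sigma) - (x-\sigma)\bigr| \leq \|D\varphi - I\|_{L^\infty([\sigma,x])}\, |x - \sigma| \leq \eta\, |x - \sigma|,
\end{equation*}
so that $(1-\eta)|x-\sigma| \leq |\varphi(x)-\varphi(\sigma)| \leq (1+\eta)|x-\sigma|$. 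Taking the infimum over $\sigma \in \Sigma$ gives the bilateral bound $(1-\eta) d_\Sigma(x) \leq d_{\varphi(\Sigma)}(\varphi(x)) \leq (1+\eta) d_\Sigma(x)$. Combining this with the Jacobian estimates, the Rayleigh quotient of $v$ is at most $(1 + C\eta)$ times that of $u$, so passing to the infimum yields
\begin{equation*}
H(\varphi(\Omega),\varphi(\Sigma)) \leq (1 + c\eta)\, H(\Omega,\Sigma).
\end{equation*}

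\medskip

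\noindent For the reverse inequality I apply the same argument to the diffeomorphism $\varphi^{-1}$ acting on $(\varphi(\Omega),\varphi(\Sigma))$. The segments needed are then of the form $[\varphi(\sigma),\varphi(x)] \subset co(\varphi(\Omega))$, so the relevant norm is $\|D\varphi^{-1} - I\|_{L^\infty(co(\varphi(\Omega)))}$. Using $D\varphi^{-1}(y) = D\varphi(\varphi^{-1}(y))^{-1}$ together with $\varphi^{-1}(co(\varphi(\Omega))) \subset co_\varphi(\Omega)$, another Neumann series bound gives
\begin{equation*}
\|D\varphi^{-1} - I\|_{L^\infty(co(\varphi(\Omega)))} \leq \frac{\eta}{1-\eta} \leq 2\eta,
\end{equation*}
which is exactly why the definition of $co_\varphi(\Omega)$ takes the union of the two convex hulls. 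This produces the symmetric estimate $H(\Omega,\Sigma) \leq (1+c\eta) H(\varphi(\Omega),\varphi(\Sigma))$, and combining the two and rearranging (for $\eta$ small enough so that $1 - c\eta$ is bounded away from zero) yields the desired inequality \eqref{cont}.

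\medskip

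\noindent The only point that requires genuine attention is the distance comparison, because $\sigma \in \partial\Omega$ need not lie in the support of $\varphi$ nor in $\Omega$ itself; this is the reason the $L^\infty$ norm has to be taken on $co_\varphi(\Omega)$ rather than on $\overline{\Omega}$, and it is where the hypothesis on the convex hulls is essential. Everything else is bookkeeping of Taylor expansions and the symmetry of the argument under passing to $\varphi^{-1}$.
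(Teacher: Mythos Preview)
Your proposal is correct and follows essentially the same route as the paper: push test functions through $\varphi$, control the Jacobian and $(D\varphi)^{-\top}$ via Neumann series, compare the distance functions along straight segments lying in the convex hull, and then obtain the reverse inequality by applying the same argument to $\varphi^{-1}$, which is precisely what forces the $L^\infty$ norm to be taken on $co_\varphi(\Omega)$. The only cosmetic difference is that the paper bounds $d_{\varphi(\Sigma)}(\varphi(x))$ from above via the arc length of $\varphi\circ\gamma$, whereas you use the fundamental theorem of calculus on $\varphi-\mathrm{id}$ along $[\sigma,x]$ to get a two-sided bound $(1-\eta)d_\Sigma\le d_{\varphi(\Sigma)}\circ\varphi\le(1+\eta)d_\Sigma$ directly; either way only the upper bound is actually used before invoking the $\varphi^{-1}$ symmetry, so the arguments coincide.
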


\begin{proof}
Let $u\in H^1_0(\Omega)$ be normalised by $\int_\Omega u^2/d_\Sigma^2 dx = 1$. For $v=u\circ \varphi^{-1}$, consider the Rayleigh quotient $$R(\varphi(\Omega),\varphi(\Sigma))[v] = \frac{\int_{\varphi(\Omega)} |\nabla v|^2 dy}{\int_{\varphi(\Omega)} v^2/d_{\varphi(\Sigma)}^2 dy} = \frac{\int_\Omega |(D\varphi)^{-\top} \nabla u|^2 |\det D\varphi| dx}{\int_\Omega \frac{u^2}{d_{\varphi(\Sigma)}^2 \circ \varphi} |\det D\varphi| dx},$$ where the last equality follows from the change of variables $y=\varphi(x)$. After some elementary calculations, it follows that $$R(\varphi(\Omega),\varphi(\Sigma))[v]-R(\Omega,\Sigma)[u] = $$ $$\frac{\int_\Omega (|(D\varphi)^{-\top} \nabla u|^2 |\det D\varphi| - |\nabla u|^2)dx - \int_\Omega |\nabla u|^2 dx \bigg( \int_\Omega \frac{u^2 |\det D\varphi|}{d_{\varphi(\Sigma)}^2 \circ \varphi}dx -1 \bigg)}{\int_\Omega \frac{u^2 |\det D\varphi|}{d_{\varphi(\Sigma)}^2 \circ \varphi}dx}.$$

In order to get an estimate for the expression $$| (D\varphi)^{-\top} \nabla u  |^2 |\det D\varphi|-|\nabla u|^2,$$ we first note that $\| A^\top \| = \| A \|$ as operator norms. To get an upper bound for the operator norm of the inverse, we also make the assumption that $\varphi$ is a ``small'' diffeomorphism in the sense that $D\varphi(x) = I+\epsilon(x)$ where $\| \epsilon(x) \|<1$. In this case it is known that $$\| (D\varphi)^{-1}(x) \| \leq \frac{1}{1-\| \epsilon(x) \|}.$$ Besides, for such $\epsilon$ there is a constant $\kappa=\kappa(n)$ such that $$|\det (I+\epsilon)-1| \leq \kappa \| \epsilon \|,$$ so eventually we have the estimate $$| (D\varphi)^{-\top} \nabla u  |^2 |\det D\varphi|-|\nabla u|^2 \leq C |\nabla u|^2 \| D\varphi -I \|$$ for some constant $C>0$ provided that $\| D\varphi -I \|$ is small.

Next, for $x\in \Omega$, we obtain an estimate of $d_{\varphi(\Sigma)}(\varphi(x))$ in terms of $d_\Sigma(x)$. Since $d_\Sigma = d_{\bar{\Sigma}}$, we may assume that $\Sigma$ is closed. Then there exists $\sigma(x) \in \Sigma$ such that $d_\Sigma(x)=|x-\sigma(x)|$. Consider the straight line segment $\gamma:[0,1]\rightarrow \mathbb{R}^n$, $$\gamma(t)=(1-t)\sigma(x)+tx$$ joining these two points. Then clearly $d_\Sigma(x)=l(\gamma)$ (the arc length of $\gamma$). Then, by definition, we have that $$d_{\varphi(\Sigma)}(\varphi(x)) \leq l(\varphi \circ \gamma) = \int_0^1 |(\varphi\circ\gamma)'(t)|dt \leq \| D\varphi \|_{L^\infty(co(\Omega))} d_\Sigma(x),$$ thus $$d_{\varphi(\Sigma)}(\varphi(x)) \leq d_\Sigma(x)(1+\| D\varphi - I \|_{L^\infty(co(\Omega))}).$$

It follows that $$\int_\Omega \frac{u^2 |\det D\varphi|}{d_{\varphi(\Sigma)}^2 \circ \varphi}dx \geq  \frac{\inf_\Omega |\det D\varphi|}{(1+\| D\varphi-1 \|_{L^\infty(co(\Omega))})^2} \int_\Omega \frac{u^2}{d_\Sigma^2}dx$$ $$\geq \frac{1-\kappa \| D\varphi-I \|_{L^\infty(\Omega)}}{(1+\| D\varphi-1 \|_{L^\infty(co(\Omega))})^2},$$ the last inequality being valid due to normalisation, thus $$\int_\Omega \frac{u^2 |\det D\varphi|}{d_{\varphi(\Sigma)}^2 \circ \varphi}dx \geq 1-C \| D\varphi -I \|_{L^\infty(co(\Omega))}$$ for some constant $C$ provided that $\| D\varphi -I \|$ is small.

Using all these estimates we obtain $$R(\varphi(\Omega),\varphi(\Sigma))[v]-R(\Omega,\Sigma)[u] \leq cR(\Omega,\Sigma)[u] \| D\varphi-I \|_{L^\infty(co(\Omega))}.$$ for some $c>0$. Passing to the appropriate limit of minimisers, we get $$H(\varphi(\Omega),\varphi(\Sigma))-H(\Omega,\Sigma) \leq c H(\Omega,\Sigma) \| D\varphi-I \|_{L^\infty(co(\Omega))}$$ Replacing $\Omega$ and $\Sigma$ by $\varphi(\Omega)$ and $\varphi(\Sigma)$ and $\varphi$ by $\varphi^{-1}$, it follows that $$H(\Omega,\Sigma) - H(\varphi(\Omega),\varphi(\Sigma)) \leq cH(\varphi(\Omega),\varphi(\Sigma)) \| (D\varphi)^{-1}-I \|_{L^\infty(\varphi^{-1}(co(\varphi(\Omega))))}.$$ Since $$\| (D\varphi)^{-1}-I \| \leq \frac{\| D\varphi-I \|}{1-\| D\varphi-I \|},$$ it follows that there is $c>0$ such that the reverse inequality $$H(\Omega,\Sigma) - H(\varphi(\Omega),\varphi(\Sigma)) \leq cH(\Omega,\Sigma) \| D\varphi - I \|_{L^\infty(\varphi^{-1}(co(\varphi(\Omega))))}$$ also holds for small $\| D\varphi - I \|$. The result follows.
\end{proof}

For small $\| \varphi - id \|_{C^1}$, we have that if $\Omega$ is relatively compact, so is $co_\varphi(\Omega)$, so we immediately deduce the following.

\begin{corollary}
Let $\Omega\subset\mathbb{R}^n$ be open and bounded, and let $\Sigma \subset \partial \Omega$. Then the map $\varphi \longmapsto H(\varphi(\Omega),\varphi(\Sigma))$ is continuous with respect to the weak $C^1$ topology.
\end{corollary}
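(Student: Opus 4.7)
The plan is to deduce continuity directly from the quantitative estimate \eqref{cont}, first at the identity and then at a general point by using the topological group structure of $Diff^1_c(\mathbb{R}^n)$. The key observation is that the weak $C^1$ topology controls $\|D\varphi - I\|_{L^\infty}$ on any prescribed compact set, so it suffices to trap $co_\varphi(\Omega)$ inside a fixed compact set independent of $\varphi$ throughout some neighborhood of the identity.

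Concretely, since $\overline{\Omega}$ is compact, so is $\overline{co(\Omega)}$; I would choose an open set $U$ with $\overline{co(\Omega)} \subset U$ and $K := \overline{U}$ compact. A short continuity argument should show that for $\varphi \in \mathcal{N}_{\mathrm{id}}(K,\epsilon_0)$ with $\epsilon_0$ small enough, $co_\varphi(\Omega) \subset K$, and then \eqref{cont} gives $|H(\varphi(\Omega),\varphi(\Sigma)) - H(\Omega,\Sigma)| \leq c H(\Omega,\Sigma)\,\|D\varphi - I\|_{L^\infty(K)}$, which tends to zero as $\varphi \to \mathrm{id}$ in the weak $C^1$ topology. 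For continuity at an arbitrary $\varphi_0$, I would apply this observation to the pair $(\varphi_0(\Omega),\varphi_0(\Sigma))$ (still open, bounded, with subset of the boundary) via the composition $\eta := \psi \circ \varphi_0^{-1}$; since $Diff^1_c(\mathbb{R}^n)$ is a topological group, $\eta \to \mathrm{id}$ in weak $C^1$ as $\psi \to \varphi_0$, and the identity $H(\psi(\Omega),\psi(\Sigma)) = H(\eta(\varphi_0(\Omega)),\eta(\varphi_0(\Sigma)))$ then yields the conclusion.

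The main obstacle is the uniform inclusion $co_\varphi(\Omega) \subset K$ for $\varphi$ near $\mathrm{id}$, which the author signals in the remark preceding the corollary. This reduces to the following chain: $\varphi(\Omega)$ stays close to $\Omega$ in Hausdorff distance (immediate from $C^0$-closeness of $\varphi$ to the identity on $\overline{\Omega}$), so $co(\varphi(\Omega))$ stays close to $co(\Omega)$, and pulling back by $\varphi^{-1}$ keeps the set close as well; choosing $K$ with a fixed collar around $\overline{co(\Omega)}$ then absorbs all these perturbations simultaneously. Once this geometric fact is in place, the rest of the corollary is a direct consequence of the theorem together with the group continuity recorded in Section 2.
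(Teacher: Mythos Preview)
Your proposal is correct and follows the same approach as the paper: deduce continuity from the quantitative estimate \eqref{cont} after ensuring that $co_\varphi(\Omega)$ stays inside a fixed compact set for $\varphi$ near the identity, then shift the base point using that $(\varphi_0(\Omega),\varphi_0(\Sigma))$ is again an admissible pair. The paper compresses all of this into the single remark preceding the corollary (``if $\Omega$ is relatively compact, so is $co_\varphi(\Omega)$''), whereas you spell out the uniform containment in a fixed $K$ and the group-structure reduction explicitly---these are precisely the details the paper leaves implicit.
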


A few remarks are in order. First, the result does not hold for the case $k=0$ (homeomorphisms), as it is essential to be able to control first derivatives. Next, note that estimate \eqref{cont} holds independent of the boundedness of $\Omega$ or compactness of $\textrm{supp}(\varphi)$, and is therefore substantially more general than the corollary.

Although of no use to the sequel, we now present a collateral result that is obtained without extra effort. Instead of the standard Euclidean distance $\textrm{dist}(x,y)=|x-y|$, for $x,y\in \Omega$ one could use the alternative ``interior'' distance $$\widetilde{\textrm{dist}}(x,y) = \inf \{ l(\gamma): \gamma\in C^1([0,1],\Omega), \gamma(0)=x , \gamma(1)=y \},$$ and consider the Hardy problem 
\begin{equation}
\tilde{H}(\Omega,\Sigma) = \inf_{u\in H^1_0(\Omega)} \frac{\int_\Omega |\nabla u|^2 dx}{\int_\Omega u^2/\tilde{d}_\Sigma^2 dx},
\end{equation}
where $\tilde{d}_\Sigma(x)=\widetilde{\textrm{dist}}(x,\Sigma)$. For that case, we obtain the almost identical result

\begin{theorem}
Let $\Omega \subset \mathbb{R}^n$ be open set with non-empty boundary, and let $\Sigma \subset \partial \Omega$ be an arbitrary subset of the boundary. Then there exist $\epsilon>0$ and $c>0$ such that for every $C^1$ diffeomorphism $\varphi$ with $\|D\varphi-I \|<\epsilon$,
\begin{equation}
|\tilde{H}(\varphi(\Omega),\varphi(\Sigma)) - \tilde{H}(\Omega, \Sigma)| \leq c \tilde{H}(\Omega,\Sigma) \| D\varphi-I \|_{L^{\infty}(\Omega)}.
\end{equation}
\end{theorem}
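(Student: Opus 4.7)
The plan is to adapt the proof of Theorem 3.1 almost verbatim; the only substantive change, which is also the reason the right-hand side involves $L^\infty(\Omega)$ rather than $L^\infty(co_\varphi(\Omega))$, lies in the distance estimate. Fix $u\in H^1_0(\Omega)$ normalised by $\int_\Omega u^2/\tilde d_\Sigma^2\,dx=1$, set $v=u\circ\varphi^{-1}$, and write $\tilde R(\varphi(\Omega),\varphi(\Sigma))[v]$ as an integral over $\Omega$ via the change of variables $y=\varphi(x)$. The decomposition of $\tilde R(\varphi(\Omega),\varphi(\Sigma))[v]-\tilde R(\Omega,\Sigma)[u]$ is identical to the one displayed in the proof of Theorem 3.1, and the pointwise bound $|(D\varphi)^{-\top}\nabla u|^2|\det D\varphi|-|\nabla u|^2\le C|\nabla u|^2\|D\varphi-I\|$ transfers verbatim under the smallness assumption on $\|D\varphi-I\|$.

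The heart of the argument is the estimate
\[ \tilde d_{\varphi(\Sigma)}(\varphi(x))\le\bigl(1+\|D\varphi-I\|_{L^\infty(\Omega)}\bigr)\,\tilde d_\Sigma(x),\qquad x\in\Omega. \]
Given $\eta>0$, pick $\sigma\in\Sigma$ and an admissible curve $\gamma\in C^1([0,1],\Omega)$ with $\gamma(0)=x$ and terminal point at (or, by approximation, arbitrarily close to) $\sigma$, satisfying $l(\gamma)\le\tilde d_\Sigma(x)+\eta$. Since $\gamma$ stays in $\Omega$ by the very definition of $\tilde d_\Sigma$, its image $\varphi\circ\gamma$ is an admissible curve in $\varphi(\Omega)$ from $\varphi(x)$ to $\varphi(\sigma)\in\varphi(\Sigma)$, so
\[ \tilde d_{\varphi(\Sigma)}(\varphi(x))\le l(\varphi\circ\gamma)=\int_0^1|D\varphi(\gamma(t))\,\gamma'(t)|\,dt\le\|D\varphi\|_{L^\infty(\Omega)}\bigl(\tilde d_\Sigma(x)+\eta\bigr), \]
and letting $\eta\to 0$ yields the claim. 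This is the single point at which the argument diverges from that of Theorem 3.1: in the extrinsic case the straight segment to the nearest point on $\Sigma$ generically leaves $\Omega$ and thereby forces control of $D\varphi$ on $co(\Omega)$, whereas here such excursions are excluded by construction.

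With this in hand, the denominator satisfies $\int_\Omega u^2|\det D\varphi|/(\tilde d_{\varphi(\Sigma)}^2\circ\varphi)\,dx\ge 1-C\|D\varphi-I\|_{L^\infty(\Omega)}$ by the same algebra as in Theorem 3.1, and combining with the numerator estimate gives
\[ \tilde R(\varphi(\Omega),\varphi(\Sigma))[v]-\tilde R(\Omega,\Sigma)[u]\le c\tilde R(\Omega,\Sigma)[u]\,\|D\varphi-I\|_{L^\infty(\Omega)}. \]
Passing to the infimum over $u$ yields one direction of the bound. The reverse direction follows by applying the forward direction to the triple $(\varphi(\Omega),\varphi(\Sigma),\varphi^{-1})$, noting that $\|D(\varphi^{-1})-I\|_{L^\infty(\varphi(\Omega))}=\|(D\varphi)^{-1}-I\|_{L^\infty(\Omega)}$ and using the standard bound $\|(D\varphi)^{-1}-I\|\le\|D\varphi-I\|/(1-\|D\varphi-I\|)$ together with the comparability of $\tilde H(\varphi(\Omega),\varphi(\Sigma))$ and $\tilde H(\Omega,\Sigma)$ for small perturbations. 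The only real obstacle is the geodesic comparison above, and the technicality of choosing curves whose endpoints lie on $\Sigma\subset\partial\Omega$ while staying inside $\Omega$ is disposed of by a routine approximation argument that does not affect the final constant.
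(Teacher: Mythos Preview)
Your proof is correct and follows essentially the same approach as the paper: the paper's proof merely states that one replaces the straight-line segment by a sequence of admissible curves $\gamma_n$ with $l(\gamma_n)\to\tilde d_\Sigma(x)$, which is exactly your $\eta\to 0$ approximation, and observes that convex hulls become unnecessary because all comparisons occur inside $\Omega$. Your write-up is in fact more detailed than the paper's one-line sketch, particularly in flagging the endpoint-on-$\partial\Omega$ technicality.
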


\begin{proof}
The proof is almost identical to that of estimate \eqref{cont}. The only difference is that instead of picking $\gamma$ to be the straight line segment joining $x$ and $\sigma(x)$, one chooses a sequence of curves $\gamma_n$ such that $l(\gamma_n)\rightarrow \tilde{d}_\Sigma(x)$.
\end{proof}

Note that taking convex hulls is unnecessary here, since all distances are compared inside $\Omega$.

\section{Differentiability of the Hardy Constant}

Now we present our main results regarding differentiability. Our methodology is similar to the one developed in \cite{BL} (which concerns the case $\Sigma=\partial \Omega$), with appropriate modifications.

\begin{lemma}
Suppose that $\Omega \subset \mathbb{R}^n$ is open with non-empty boundary and let $\Sigma \subset \partial \Omega$ be closed. Let $\varphi \in \textrm{Diff}^1_c(\mathbb{R}^n)$, $\xi \in \mathfrak{X}^1_c(\mathbb{R}^n)$ and let $t_0>0$ be such that $$\varphi_t= \varphi+t\xi$$ is a $C^1$ diffeomorphism for all $t\in [-t_0,t_0]$. Then:
\begin{enumerate}
\item There exists a constant $c=c(\Omega,\varphi,\xi,t_0)$ such that
\begin{equation}
\label{dest}
|d_{\varphi_t(\Sigma)}^2(\varphi_t(x))-d_{\varphi(\Sigma)}^2(\varphi(x))| \leq c d_{\varphi(\Sigma)}^2(\varphi(x)) |t|
\end{equation}
for all $x\in \Omega$ and all $t\in [-t_0,t_0]$.
\item If $d_{\varphi(\Sigma)}$ is differentiable at $\varphi(x)$ and $\sigma(x) \in \Sigma$ is the single point such that $d_{\varphi(\Sigma)}(\varphi(x))=|\varphi(x)-\varphi(\sigma(x))|$, then
\begin{equation}
\label{dder}
\frac{d}{dt}\bigg|_{t=0} d_{\varphi_t(\Sigma)}^2(\varphi_t(x)) = 2(\varphi(x)-\varphi(\sigma(x)))\cdot(\xi(x)-\xi(\sigma(x))).
\end{equation}
\end{enumerate}
\end{lemma}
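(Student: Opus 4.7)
For Part (1), the approach is to exploit the very definition of the distance function together with the linear structure of $\varphi_t-\varphi=t\xi$. I would fix $x\in\Omega$, select a nearest point $\sigma(x)\in\Sigma$ realizing $d_{\varphi(\Sigma)}(\varphi(x))=|\varphi(x)-\varphi(\sigma(x))|$ (which exists because $\varphi(\Sigma)$ is closed), and then bound from above:
\begin{equation*}
d_{\varphi_t(\Sigma)}(\varphi_t(x)) \leq |\varphi_t(x)-\varphi_t(\sigma(x))| \leq d_{\varphi(\Sigma)}(\varphi(x)) + |t|\,|\xi(x)-\xi(\sigma(x))|.
\end{equation*}
The term $|\xi(x)-\xi(\sigma(x))|$ is controlled by $\|D\xi\|_{\infty}|x-\sigma(x)|$, which in turn is bounded by $\|D\varphi^{-1}\|_{\infty}|\varphi(x)-\varphi(\sigma(x))|=\|D\varphi^{-1}\|_{\infty}\,d_{\varphi(\Sigma)}(\varphi(x))$ via the mean value inequality applied to $\varphi^{-1}$; here the compact support of $\varphi$ and $\xi$ keeps all these norms finite. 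This yields $d_{\varphi_t(\Sigma)}(\varphi_t(x)) \leq (1+C|t|)\,d_{\varphi(\Sigma)}(\varphi(x))$. A symmetric argument with a nearest point $\sigma_t(x)$ for $\varphi_t(\Sigma)$, using the uniform control on $\|D\varphi_t^{-1}\|_{\infty}$ over $t\in[-t_0,t_0]$, produces $d_{\varphi(\Sigma)}(\varphi(x)) \leq (1+C|t|)\,d_{\varphi_t(\Sigma)}(\varphi_t(x))$. Squaring and using $|a^2-b^2|=|a-b|(a+b)$ with these two-sided bounds gives estimate \eqref{dest}.

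For Part (2), the strategy is an envelope-theorem argument. The upper bound is immediate: using $d_{\varphi_t(\Sigma)}^2(\varphi_t(x))\leq|\varphi_t(x)-\varphi_t(\sigma(x))|^2$ and expanding,
\begin{equation*}
d_{\varphi_t(\Sigma)}^2(\varphi_t(x)) - d_{\varphi(\Sigma)}^2(\varphi(x)) \leq 2t(\varphi(x)-\varphi(\sigma(x)))\cdot(\xi(x)-\xi(\sigma(x))) + O(t^2),
\end{equation*}
which after dividing by $t$ and letting $t\to 0^{\pm}$ gives the correct one-sided bounds on the difference quotient. For the matching lower bound, I would use $d_{\varphi(\Sigma)}^2(\varphi(x))\leq|\varphi(x)-\varphi(\sigma_t(x))|^2$, where $\sigma_t(x)\in\Sigma$ is any nearest point for $\varphi_t(\Sigma)$. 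Writing $\varphi(y)=\varphi_t(y)-t\xi(y)$ and expanding yields
\begin{equation*}
d_{\varphi_t(\Sigma)}^2(\varphi_t(x)) - d_{\varphi(\Sigma)}^2(\varphi(x)) \geq 2t(\varphi(x)-\varphi(\sigma_t(x)))\cdot(\xi(x)-\xi(\sigma_t(x))) + O(t^2).
\end{equation*}

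The crux, and the main obstacle, is to show $\sigma_t(x)\to\sigma(x)$ as $t\to 0$, so that the right-hand side of the lower bound converges to the expected limit. I would argue by sequential compactness: by Part (1), $|\varphi(x)-\varphi(\sigma_t(x))|$ stays bounded (it differs from $d_{\varphi_t(\Sigma)}(\varphi_t(x))$ by a $O(t)$ term using the boundedness of $\xi$), and since $\varphi$ is a homeomorphism of $\mathbb{R}^n$ that equals the identity outside a compact set, the points $\sigma_t(x)$ remain in a bounded subset of the closed set $\Sigma$. Any subsequential limit $\sigma^{*}\in\Sigma$ must satisfy $|\varphi(x)-\varphi(\sigma^{*})|=d_{\varphi(\Sigma)}(\varphi(x))$ by continuity and Part (1); the uniqueness of the nearest point (which is exactly the content of differentiability of $d_{\varphi(\Sigma)}$ at $\varphi(x)$) together with injectivity of $\varphi$ forces $\sigma^{*}=\sigma(x)$. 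This promotes the subsequential convergence to full convergence $\sigma_t(x)\to\sigma(x)$, allowing both one-sided limits of the difference quotient to coincide with the expression in \eqref{dder}.
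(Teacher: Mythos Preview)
Your proof is correct and follows essentially the same approach as the paper's: for Part~(1) both arguments bound $d_{\varphi_t(\Sigma)}(\varphi_t(x))$ from above and below by choosing nearest points $\sigma$ and $\sigma_t$ respectively and controlling $|\xi(x)-\xi(\sigma)|$ via a Lipschitz estimate (the paper does this in one step through $\xi\circ\varphi^{-1}$, you in two via $\xi$ and then $\varphi^{-1}$, which is equivalent), and for Part~(2) both run the envelope argument with the key step being $\sigma_t(x)\to\sigma(x)$, which the paper proves by contradiction and you by direct sequential compactness plus uniqueness of the nearest point. The differences are purely cosmetic.
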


\begin{proof}
(1) Let $x\in\Omega$. Since $\Sigma$ is closed, there exists a $\sigma\in \Sigma$ such that $d_{\varphi(\Sigma)}(\varphi(x)) = |\varphi(x)-\varphi(\sigma)|$. It follows that $$d_{\varphi_t(\Sigma)}^2(\varphi_t(x)) \leq |\varphi_t(x)-\varphi_t(\sigma)|^2 = |\varphi(x)-\varphi(\sigma)+t(\xi(x)-\xi(\sigma))|^2$$ $$= d_{\varphi(\Sigma)}^2(\varphi(x))+2t(\varphi(x)-\varphi(\sigma))\cdot (\xi(x)-\xi(\sigma))+t^2|\xi(x)-\xi(\sigma)|^2.$$ Moreover, we have that $$|\xi(x)-\xi(\sigma)| = \bigg| \int_0^1 \frac{d}{ds} (\xi\circ\varphi^{-1})(s\varphi(\sigma)+(1-s)\varphi(x)) ds \bigg| $$ $$\leq \| D(\xi\circ\varphi^{-1}) \|_{L^\infty(co(\varphi(\Omega)))} |\varphi(x) - \varphi(\sigma)|$$ $$=  \| D(\xi\circ\varphi^{-1}) \|_{L^\infty(co(\varphi(\Omega)))} d_{\varphi(\Sigma)}(\varphi(x)).$$

Likewise, let $\sigma_t \in \Sigma$ be such that $d_{\varphi_t(\Sigma)}(\varphi_t(x))=|\varphi_t(x)-\varphi_t(\sigma_t)|$. Then $$d_{\varphi_t(\Sigma)}^2(\varphi_t(x))=|\varphi(x)-\varphi(\sigma_t)|^2+2t(\varphi(x)-\varphi(\sigma_t))\cdot (\xi(x)-\xi(\sigma_t))+t^2|\xi(x)-\xi(\sigma_t)|^2$$ $$\geq d_{\varphi(\Sigma)}^2(\varphi(x)) +2t(\varphi(x)-\varphi(\sigma_t))\cdot (\xi(x)-\xi(\sigma_t))+t^2|\xi(x)-\xi(\sigma_t)|^2,$$ and as before we have $$|\xi(x)-\xi(\sigma_t)| \leq \| D(\xi\circ\varphi_t^{-1}) \|_{L^\infty(co(\varphi_t(\Omega)))} d_{\varphi_t(\Sigma)}(\varphi_t(x)).$$

As $[-t_0,t_0]$ is compact, $\| D(\xi\circ\varphi_t^{-1}) \|_{L^\infty(co(\varphi_t(\Omega)))}$ attains a finite maximum value in it, and so follows the existence of a constant so that the conclusion holds.

(2) Assume that $d_{\varphi(\Sigma)}$ is differentiable at $\varphi(x)$. Thus there exists a unique $\sigma=\sigma(x) \in \Sigma$ such that $d_{\varphi(\Sigma)}(\varphi(x))=|\varphi(x)-\varphi(\sigma(x))|$. From (1), we know that
\begin{equation}
\label{dcont}
\lim_{t\rightarrow 0} d_{\varphi_t(\Sigma)}(\varphi_t(x)) = d_{\varphi(\Sigma)}(\varphi(x)).
\end{equation}

Now we claim that $\lim_{t\rightarrow 0} \sigma_t = \sigma$ ($\sigma_t$ as defined in the previous step). To this end, it suffices to show that $$\lim_{t\rightarrow 0} \varphi_t(\sigma_t) = \varphi(\sigma).$$ Assume, by contradiction, that there exists $\sigma'\in \Sigma$, $\sigma'\neq \sigma$, such that, possibly passing to a subsequence, $$\lim_{t\rightarrow 0} \varphi_t(\sigma_t) = \varphi(\sigma').$$ Then $$|\varphi(x)-\varphi(\sigma')|>d_{\varphi(\Sigma)}(\varphi(x))+\epsilon$$ for some $\epsilon>0$. In particular, $$\lim_{t\rightarrow 0} |\varphi_t(\sigma_t)-\varphi(x)|= |\varphi(\sigma')-\varphi(x)|>d_{\varphi(\Sigma)}(\varphi(x))+\epsilon.$$

Moreover, $$|\varphi_t(\sigma_t)-\varphi(x)|^2 = |\varphi_t(\sigma_t)-\varphi_t(x)+t\xi(x)|^2$$ $$=d_{\varphi_t(\Sigma)}^2(\varphi_t(x)) + 2t(\varphi_t(\sigma_t)-\varphi_t(x)) \cdot \xi(x)+t^2|\xi(x)|^2,$$ and by \eqref{dcont} we deduce that $$\lim_{t\rightarrow 0} |\varphi_t(\sigma_t)-\varphi(x)|=d_{\varphi(\Sigma)}(\varphi(x)),$$ a contradiction.

From the estimates of the previous step and the claim we deduce that $$\frac{d}{dt}\bigg|_{t=0} d_{\varphi_t(\Sigma)}^2(\varphi_t(x)) = 2(\varphi(x)-\varphi(\sigma(x)))\cdot(\xi(x)-\xi(\sigma(x))).$$
\end{proof}

From this point on, we will assume that $\Omega$ is bounded and Lipschitz. By the results of \cite{D}, we know that the Hardy inequality holds in $\Omega$ for some positive constant for $\Sigma = \partial \Omega$. Since $d_\Sigma \geq d_{\partial \Omega}$, the same is true if we choose any $\Sigma \subset \partial \Omega$.

\begin{lemma}
Suppose that $\Omega \subset \mathbb{R}^n$ is a bounded Lipschitz domain and let $\Sigma \subset \partial \Omega$ be closed. Let also $u\in H^1_0(\Omega)$ and $\rho \in L^\infty(\Omega)$. Then the function $G: Diff^1_c(\mathbb{R}^n) \rightarrow \mathbb{R}$ $(k\geq 1)$ given by $$G(\varphi)= \int_\Omega \frac{u^2 \rho}{d_{\varphi(\Sigma)}^2 \circ \varphi} dx$$  is Gateaux differentiable and, for $\xi \in \mathfrak{X}^1_c(\mathbb{R}^n)$ $$D_\varphi G(\xi)= -2 \int_\Omega \frac{u^2(x) \rho(x) (\varphi(x)-\varphi(\sigma(x))) \cdot (\xi(x)-\xi(\sigma(x)))}{d_{\varphi(\Sigma)}^4(\varphi(x))}dx.$$
\end{lemma}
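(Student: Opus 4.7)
The plan is to compute the Gateaux derivative directly by forming the difference quotient and exchanging limit and integral via the dominated convergence theorem; the two parts of the preceding lemma will respectively supply the pointwise limit of the integrand and a uniform integrable dominator.

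I would start by writing, for $\varphi_t = \varphi + t\xi$,
\begin{equation*}
\frac{G(\varphi_t)-G(\varphi)}{t} = \int_\Omega u^2\rho \cdot \frac{d_{\varphi(\Sigma)}^2(\varphi(x)) - d_{\varphi_t(\Sigma)}^2(\varphi_t(x))}{t\, d_{\varphi_t(\Sigma)}^2(\varphi_t(x))\, d_{\varphi(\Sigma)}^2(\varphi(x))}\, dx.
\end{equation*}
The distance function $d_{\varphi(\Sigma)}$ is $1$-Lipschitz, so by Rademacher's theorem it is differentiable almost everywhere in $\mathbb{R}^n$, hence for almost every $x \in \Omega$ (pulling back through the $C^1$ diffeomorphism $\varphi$). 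At such points of differentiability the nearest point $\varphi(\sigma(x))$ is automatically unique, and part (2) of the lemma then delivers the pointwise limit
\begin{equation*}
-\frac{2(\varphi(x)-\varphi(\sigma(x)))\cdot(\xi(x)-\xi(\sigma(x)))}{d_{\varphi(\Sigma)}^4(\varphi(x))}
\end{equation*}
a.e.\ on $\Omega$, which is exactly the integrand in the claimed formula.

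For the dominator I would use part (1): the numerator in the integrand is bounded by $c\, d_{\varphi(\Sigma)}^2(\varphi(x))\, |t|$, and the same estimate pushed in the other direction yields $d_{\varphi_t(\Sigma)}^2(\varphi_t(x)) \geq (1-c|t|)\, d_{\varphi(\Sigma)}^2(\varphi(x)) \geq \tfrac{1}{2} d_{\varphi(\Sigma)}^2(\varphi(x))$ once $|t|$ is small enough. Combining the two, the full integrand of the difference quotient is dominated by $C u^2 |\rho| / d_{\varphi(\Sigma)}^2(\varphi(x))$ for a $t$-independent constant $C$.

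It remains to verify that this majorant is integrable, after which dominated convergence closes the argument. Since $\varphi$ is Lipschitz with compact support, $d_{\varphi(\Sigma)}(\varphi(x)) \geq \|D\varphi^{-1}\|_{L^\infty}^{-1} d_\Sigma(x)$, so the majorant is controlled by a constant times $u^2/d_\Sigma^2$; this is integrable on $\Omega$ by the Hardy inequality for the pair $(\Omega,\Sigma)$, valid because $\Omega$ is bounded Lipschitz and $d_\Sigma \geq d_{\partial\Omega}$. The main subtlety is arranging for the dominator to be uniform in $t$ while simultaneously extracting the lower bound on $d_{\varphi_t(\Sigma)}^2(\varphi_t(x))$ from the very same estimate in part (1); once these two bounds are aligned, dominated convergence yields the stated formula for $D_\varphi G(\xi)$.
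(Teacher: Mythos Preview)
Your proposal is correct and follows essentially the same route as the paper: form the difference quotient, use part~(1) of the preceding lemma to produce a $t$-independent dominator of the form $C u^2|\rho|/d_{\varphi(\Sigma)}^2\circ\varphi$, invoke the Hardy inequality to see this is integrable, and use part~(2) together with a.e.\ differentiability of the distance function for the pointwise limit. The only cosmetic difference is that the paper justifies integrability of the dominator by noting that $C^1$ diffeomorphisms preserve the Lipschitz property (so Hardy holds for $(\varphi(\Omega),\varphi(\Sigma))$), whereas you reduce to Hardy on $(\Omega,\Sigma)$ via the bi-Lipschitz comparison $d_{\varphi(\Sigma)}(\varphi(x))\gtrsim d_\Sigma(x)$; these are equivalent one-line observations.
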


\begin{proof}
Let $\varphi\in Diff^1_c(\mathbb{R}^n)$ and $\varphi_t=\varphi+t\xi$ as before. Then $$\frac{G(\varphi_t)-G(\varphi)}{t}= - \int_\Omega \frac{u^2 \rho (d_{\varphi_t(\Sigma)}^2\circ \varphi_t - d_{\varphi(\Sigma)}^2 \circ \varphi)}{t (d_{\varphi_t(\Sigma)}^2\circ \varphi_t)  (d_{\varphi(\Sigma)}^2 \circ \varphi)} dx.$$ By estimate \eqref{dest}, there is a constant $c>0$ such that $$\frac{u^2 \rho (d_{\varphi_t(\Sigma)}^2\circ \varphi_t - d_{\varphi(\Sigma)}^2 \circ \varphi)}{|t| (d_{\varphi_t(\Sigma)}^2\circ \varphi_t)  (d_{\varphi(\Sigma)}^2 \circ \varphi)} \leq c \frac{u^2 \rho}{d_{\varphi(\Sigma)}^2}$$ for $t$ sufficiently small. Since $\rho\in L^\infty(\omega)$ and $\Omega$ is bounded, and since $u\in H^1_0(\Omega)$ and the Hardy inequality holds (the later is true becauce $C^1$ diffeomorphisms preserve the Lipschitz property), it follows that the integrand is absolutely bounded by an $L^1$ function and the Dominated Convergence theorem applies.

Since $d_{\varphi(\Sigma)}(\varphi(x))$ is differentiable for almost all $x\in\Omega$, the unique point $\sigma(x)\in\Sigma$ is defined for almost all $x\in\Omega$ and the result follows by \eqref{dder}.
\end{proof}

We wish to prove that the Hardy constant $H(\varphi(\Omega),\varphi(\Sigma))$ is Gateaux differentiable with respect to $\varphi$, which is equivalent to proving that the map $t\mapsto H(\varphi_t(\Omega),\varphi_t(\Sigma))$ is differentiable with respect to $t$ for any $\xi \in \mathfrak{X}^1_c(\mathbb{R}^n)$, where $$\varphi_t=\varphi+t\xi.$$ Doing so will be possible provided that there are actual minimisers to the constants $H(\varphi_t(\Omega),\varphi_t(\Sigma))$, and that these actually behave ``well'' as $t$ varies, i.e. they are stable.

Here we draw some important facts coming from other works that are vital in order to proceed.

\begin{lemma}
Suppose that $\Omega \subset \mathbb{R}^n$ $(n\geq 2)$ is a smooth bounded domain, and let $\Sigma \subset \partial \Omega$ be a closed submanifold of dimension $s\in \{0,1,...,n-1 \}$. Consider the Hardy problem
\begin{equation}
\label{Hardy}
H(\Omega,\Sigma)= \inf_{\substack{u\in H^1_0(\Omega), \\ u\neq 0}} \frac{\int_\Omega |\nabla u|^2 dx}{\int_\Omega u^2/d_\Sigma^2 dx}.
\end{equation}
Then precisely one of the following is true:
\begin{enumerate}
\item The problem has a minimiser and $H(\Omega,\Sigma)<(n-s)^2/4$.
\item The problem does not have a minimiser and $H(\Omega,\Sigma)=(n-s)^2/4$.
\end{enumerate}
\end{lemma}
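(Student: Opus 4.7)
The plan is to establish the dichotomy via a concentration–compactness analysis of a minimising sequence, after first verifying the upper bound $H(\Omega,\Sigma)\le (n-s)^2/4$. For the upper bound I would work in local coordinates $(y',y'')\in\mathbb{R}^s\times\mathbb{R}^{n-s}$ near a smooth point of $\Sigma$ in which $\Sigma$ flattens to $\{y''=0\}$ and $d_\Sigma(y)=|y''|+O(|y''|^2)$, and test with $u_\epsilon(y)=\eta(y)\,|y''|^{-(n-s-2)/2}\chi(|y''|/\epsilon)$ for a smooth cutoff $\eta$ supported in the chart and a suitable radial cutoff $\chi$; a direct calculation yields a Rayleigh quotient tending to $(n-s)^2/4$ as $\epsilon\to 0$, which is the sharp constant for the Hardy inequality in codimension $n-s$.

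The core of the proof is a concentration–compactness argument. I take a minimising sequence $u_k\in H^1_0(\Omega)$ normalised by $\int_\Omega u_k^2/d_\Sigma^2\,dx=1$ with $\int_\Omega|\nabla u_k|^2\,dx\to H(\Omega,\Sigma)$. The sequence is bounded in $H^1_0$, so up to a subsequence $u_k\rightharpoonup u$ weakly in $H^1_0(\Omega)$, strongly in $L^2(\Omega)$ and a.e., and strongly in $L^2_{\mathrm{loc}}(\Omega\setminus\Sigma)$. Writing $u_k=u+w_k$ with $w_k\rightharpoonup 0$, Brezis–Lieb-type splittings give
$$\int_\Omega|\nabla u_k|^2\,dx=\int_\Omega|\nabla u|^2\,dx+\int_\Omega|\nabla w_k|^2\,dx+o(1), \qquad \int_\Omega\frac{u_k^2}{d_\Sigma^2}\,dx=\int_\Omega\frac{u^2}{d_\Sigma^2}\,dx+\int_\Omega\frac{w_k^2}{d_\Sigma^2}\,dx+o(1),$$
the second splitting following from the uniform bound on the weighted norms together with pointwise convergence. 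Setting $\beta=\lim_k\int_\Omega w_k^2/d_\Sigma^2\,dx$, the fact that $w_k\to 0$ in $L^2_{\mathrm{loc}}(\Omega\setminus\Sigma)$ means $\beta$ represents mass concentrating on $\Sigma$; applying the sharp local Hardy inequality via a partition of unity over a tubular neighbourhood of $\Sigma$ gives $\liminf_k\int_\Omega|\nabla w_k|^2\,dx\ge\tfrac{(n-s)^2}{4}\beta$. Combining everything yields the master estimate
$$H(\Omega,\Sigma)\ \ge\ \int_\Omega|\nabla u|^2\,dx+\frac{(n-s)^2}{4}\beta, \qquad \int_\Omega\frac{u^2}{d_\Sigma^2}\,dx=1-\beta.$$

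The dichotomy then follows by case analysis. If $u\not\equiv 0$ then $\int|\nabla u|^2\ge H(\Omega,\Sigma)(1-\beta)$, and the master estimate forces $\bigl(\tfrac{(n-s)^2}{4}-H(\Omega,\Sigma)\bigr)\beta\le 0$, so that either $\beta=0$ (in which case $u$ is a minimiser) or $H(\Omega,\Sigma)=(n-s)^2/4$. If $u\equiv 0$ then $\beta=1$ and the master estimate directly yields $H(\Omega,\Sigma)\ge (n-s)^2/4$, hence equality and no minimiser is produced. The remaining task, which I expect to be the main obstacle, is to rule out a minimiser at the critical level $H(\Omega,\Sigma)=(n-s)^2/4$; for this I would appeal to the strict form of the local sharp Hardy inequality for nontrivial $H^1_0$ functions: if a critical minimiser $u$ existed, an appropriate localisation and truncation of $u$ near a smooth point of $\Sigma$ would produce a competitor whose Rayleigh quotient lies strictly below $(n-s)^2/4$, contradicting minimality. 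This non-attainment statement is by now classical and, in its various guises for $s=0$, for intermediate $s$, and for $s=n-1$, is contained in the results of \cite{BFT2}, \cite{FM} and \cite{MMP} respectively.
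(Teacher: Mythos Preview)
The paper does not prove this lemma at all: its entire proof reads ``This is Corollary 1.3 in \cite{FM}. The case $s=0$ was treated separately in \cite{FMu}, and the case $s=n-1$ is well known (see \cite{MMP}).'' Your concentration--compactness outline is, in spirit, the strategy actually carried out in those references, so in that sense you are supplying what the paper merely quotes.

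That said, there is a genuine gap in your upper-bound step. Your test function $u_\epsilon(y)=\eta(y)\,|y''|^{-(n-s-2)/2}\chi(|y''|/\epsilon)$ is the profile for an \emph{interior} codimension-$(n-s)$ singularity, where the sharp Hardy constant is $(n-s-2)^2/4$; a direct computation with this profile gives Rayleigh quotient tending to $(n-s-2)^2/4$, not $(n-s)^2/4$. The point of the lemma is precisely that $\Sigma\subset\partial\Omega$, and the boundary placement shifts the critical constant from $(n-s-2)^2/4$ to $(n-s)^2/4$. To reach $(n-s)^2/4$ one must build in the Dirichlet condition on $\partial\Omega$: in the flattened model $\{y_n>0\}$ with $\Sigma=\{y''=0\}$, the correct concentrating profile looks like $y_n\,|y''|^{-(n-s)/2}$ (times cutoffs), and the factor $y_n$ is what produces the extra $+1$ in the exponent and hence the constant $(n-s)^2/4$. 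Relatedly, your function as written is not in $H^1_0(\Omega)$, since $\eta$ does not vanish on $\partial\Omega$ near $\Sigma$. The same issue resurfaces in your master estimate: the inequality $\liminf_k\int|\nabla w_k|^2\ge\tfrac{(n-s)^2}{4}\beta$ requires the \emph{boundary} sharp local Hardy inequality (on half-balls centred on $\Sigma$), not the interior one, and this is exactly the nontrivial input from \cite{FM,FMu}. Once you replace the interior model by the half-space model throughout, the rest of your dichotomy argument goes through as written.
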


\begin{proof}
This is Corollary 1.3 in \cite{FM}. The case $s=0$ was treated separately in \cite{FMu}, and the case $s=n-1$ is well known (see \cite{MMP}).
\end{proof}

So in order to proceed we need from now on the additional assumption that $H(\varphi(\Omega), \varphi(\Sigma)) < (n-s)^2/4$ in order to guarantee the existence of minimisers. This assumption is not terribly restrictive, since $\varphi \mapsto H(\varphi(\Omega),\varphi(\Sigma))$ is a continuous map and the inverse image of $(-\epsilon,(n-s)^2/4)$ with respect to that map is an open set of $Diff^1_c(\mathbb{R}^n)$.

Next we provide some estimates for these minimisers.

\begin{lemma}
Let $\Omega$ and $\Sigma$ be as in the previous lemma, and suppose that $v\in H^1_0(\Omega)$ is a minimiser of $\eqref{Hardy}$. Then there is a constant $C=C(\Omega,\Sigma)>0$ such that $$v < C d_{\partial \Omega} d_{\Sigma}^\alpha,$$ where $$\alpha = \frac{s-n+ \sqrt{(n-s)^2 - 4H(\Omega,\Sigma)}}{2}$$
\end{lemma}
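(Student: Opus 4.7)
\medskip
\noindent\textbf{Proof plan.}

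The strategy is to construct an explicit barrier of the form $w = C \, d_{\partial\Omega} \, d_\Sigma^\alpha$ and to compare it with $v$ in a tubular neighborhood of $\partial\Omega$, exploiting that $\alpha$ is the correct indicial exponent of the model Hardy operator. The first step is the standard one: since $v$ is a minimiser, it satisfies the Euler--Lagrange equation $-\Delta v = H(\Omega,\Sigma) v / d_\Sigma^2$ weakly in $\Omega$, and standard elliptic regularity away from $\Sigma$, together with $v \in H^1_0(\Omega)$, gives $v \in C^\infty(\Omega\setminus\Sigma)\cap C(\overline\Omega)$ vanishing on $\partial\Omega$. One may assume $v > 0$ in $\Omega$ by replacing $v$ with $|v|$.

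The choice of $\alpha$ is motivated by the flat model: in $\mathbb{R}^n_+ = \{x_n>0\}$ with $\Sigma_0 = \mathbb{R}^s\times\{0\}\subset \partial\mathbb{R}^n_+$, a direct computation shows that the ansatz $u_0(x) = x_n\,|x''|^\beta$, where $x'' = (x_{s+1},\dots,x_n)$, satisfies $-\Delta u_0 = H u_0/|x''|^2$ if and only if $\beta^2 + (n-s)\beta + H = 0$. The larger root is precisely the $\alpha$ in the statement, which is real exactly when $H \leq (n-s)^2/4$; this upper bound is guaranteed by the preceding lemma.

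Next, using smoothness of $\Omega$ and $\Sigma$, in a tubular neighborhood $U$ of $\partial\Omega$ the functions $d_{\partial\Omega}$ and $d_\Sigma$ are smooth and, in adapted local coordinates near any point of $\Sigma$, they approximate the flat distances up to curvature error. Define $w = C\,d_{\partial\Omega}(d_\Sigma + \delta)^\alpha$, where $\delta>0$ is a small regularisation parameter. A calculation modelled on the flat case shows that the leading singular term in $-\Delta w - H w/d_\Sigma^2$ cancels thanks to the indicial equation, leaving only lower-order curvature terms. Shrinking $U$ if necessary, these terms are dominated and $w$ becomes a classical supersolution of $L := -\Delta - H/d_\Sigma^2$ in $U$.

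Finally, apply comparison on the thin strip $U_\epsilon = \{d_{\partial\Omega}<\epsilon\}\cap\Omega$. On $\{d_{\partial\Omega} = \epsilon\}$, $v$ is bounded by continuity on a compact set and $w \geq c(\epsilon,\delta) > 0$, so choosing $C$ large enough forces $v \leq w$ there; on $\partial\Omega$, both vanish. For $\epsilon$ small, the principal Dirichlet eigenvalue of $L$ on $U_\epsilon$ strictly exceeds $H$ (because $U_\epsilon$ shrinks in the normal direction, so its Poincar\'e constant blows up while the Hardy potential is dominated by the gradient term), so $L$ satisfies the weak maximum principle on $U_\epsilon$ and we conclude $v \leq w$ in $U_\epsilon$. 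Letting $\delta\to 0$ and handling the interior complementary region by trivial boundedness gives the desired estimate. The main obstacle is the barrier computation: verifying that the curvature and cross terms arising from replacing flat distances by $d_{\partial\Omega}$ and $d_\Sigma$ are genuinely subcritical, so that they can be absorbed by taking $U$ small or adjusting $C$, especially in the borderline case where $\alpha$ is close to the critical value $(s-n)/2$.
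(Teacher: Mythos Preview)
The paper does not actually prove this lemma: it simply cites Lemmas 2.1 and 2.2 of Marcus--Nguyen \cite{MN} and remarks that the same argument goes through with $\lambda=0$. Your barrier/supersolution sketch is precisely the method used there, and your indicial computation $\beta^2+(n-s)\beta+H=0$ with larger root $\alpha$ is correct, so at the level of strategy you are aligned with the cited source.

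One step in your outline is not justified as written. You claim that for $\epsilon$ small the operator $L=-\Delta-H/d_\Sigma^2$ satisfies the weak maximum principle on $U_\epsilon=\{d_{\partial\Omega}<\epsilon\}$ ``because $U_\epsilon$ shrinks in the normal direction, so its Poincar\'e constant blows up while the Hardy potential is dominated by the gradient term.'' This reasoning fails: the potential $H/d_\Sigma^2$ is singular along $\Sigma\subset\partial U_\epsilon$ no matter how small $\epsilon$ is, so a Poincar\'e inequality on a thin strip cannot absorb it. What you actually have is only $H(U_\epsilon,\Sigma)\geq H(\Omega,\Sigma)$ from domain monotonicity, which by itself does not give a spectral gap. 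The correct route (and the one taken in \cite{MN}) is to use the barrier $w$ you have already built: once $Lw\geq 0$ and $w>0$ in $U_\epsilon$, the comparison $v\leq w$ follows from a weak maximum principle for operators admitting a positive supersolution (equivalently, via the ground-state transform $v/w$), not from an independent eigenvalue estimate. With that correction the argument goes through; the curvature/cross-term verification you flag as the ``main obstacle'' is genuinely where the work lies, and is handled carefully in \cite{MN}.
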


\begin{proof}
This was proven in \cite{MN} for the eigenfunction corresponding to the first eigenvalue of the relevant Schrödinger operator (Lemmas 2.1 and 2.2). The same steps can be repeated for $\lambda=0$, which simplifies the proof even further.
\end{proof}

\begin{theorem}
Suppose that $\Omega \subset \mathbb{R}^n$ $(n\geq 2)$ is a smooth bounded domain, and let $\Sigma \subset \partial \Omega$ be a closed submanifold of dimension $s$. Let $v\in H^1_0(\Omega)$ be a minimiser of $\eqref{Hardy}$ (and so $H(\Omega,\Sigma)<(n-s)^2/4$). Then the following estimates are satisfied: $$v \leq C d_\Sigma^{\alpha+1},$$ $$|\nabla v| \leq C d_\Sigma^\alpha,$$ where $C=C(\Omega,\Sigma)$.
\end{theorem}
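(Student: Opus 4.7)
The first estimate follows directly from the previous lemma together with the inclusion $\Sigma \subset \partial\Omega$, which gives $d_{\partial\Omega}(x) \leq d_\Sigma(x)$ for every $x \in \Omega$. Combining,
$$ v(x) \leq C\, d_{\partial\Omega}(x)\, d_\Sigma^\alpha(x) \leq C\, d_\Sigma(x)^{\alpha+1}. $$

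For the gradient bound, the plan is a standard scaling argument applied to the Euler--Lagrange equation satisfied by the minimiser:
$$ -\Delta v = \frac{H(\Omega,\Sigma)}{d_\Sigma^2}\, v \ \text{ in } \Omega, \qquad v = 0 \ \text{ on } \partial\Omega. $$
Fix $x_0 \in \Omega$, set $r = d_\Sigma(x_0)/2$, and define the rescaled function
$$ w(y) = r^{-(\alpha+1)} v(x_0 + ry), \qquad y \in \widehat\Omega := r^{-1}(\Omega - x_0) \cap B(0,2). $$
Since $r = d_\Sigma(x_0)/2$, the ball $B(x_0,2r)$ stays at positive distance from $\Sigma$, so $d_\Sigma(x_0+ry)$ is comparable to $r$ on $\widehat\Omega$. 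By the first estimate, $\|w\|_{L^\infty(\widehat\Omega)}$ is bounded independently of $x_0$, and $w$ solves an equation of the form $-\Delta_y w = a(y) w$ with $\|a\|_{L^\infty(\widehat\Omega)}$ uniformly bounded, together with $w = 0$ on the image of $\partial\Omega$ inside $\widehat\Omega$.

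I would then apply elliptic regularity on the half-size ball. If $d_{\partial\Omega}(x_0) \geq r/4$, then $B(0, 1/4) \subset \widehat\Omega$ and standard interior $W^{2,p}$/Schauder estimates give $|\nabla w(0)| \leq C$. Otherwise, $\widehat\Omega$ meets the rescaled piece of $\partial\Omega$ near $0$; since $\partial\Omega$ is smooth, after local flattening the boundary portion of $\widehat\Omega$ is represented as the graph of a smooth function whose $C^{k,\gamma}$ seminorms are bounded by $r \leq 1$ times those of $\partial\Omega$ (the rescaling makes the boundary flatter, not less smooth). Boundary Schauder estimates, together with the homogeneous Dirichlet condition, again yield $|\nabla w(0)| \leq C$. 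Undoing the rescaling,
$$ |\nabla v(x_0)| = r^\alpha |\nabla w(0)| \leq C\, d_\Sigma(x_0)^\alpha. $$

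The main point to verify carefully is that the constants in the boundary Schauder estimate are uniform in $x_0$. This is where the smoothness of $\partial\Omega$ is essential: by compactness of $\partial\Omega$ the local graph representations of $\partial\Omega$ have uniformly bounded $C^{k,\gamma}$ norms, and the rescaling by $r \leq 1$ does not worsen these bounds, so the same regularity constant works for every $x_0$.
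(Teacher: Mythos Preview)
Your argument is correct, but it takes a longer road than the paper's. The key difference is the choice of scale: you rescale by $r = d_\Sigma(x_0)/2$, whereas the paper sets $R = d_{\partial\Omega}(x)/3$. Because $R \le d_\Sigma(x)/3$, the ball $B(x,R)$ always lies inside $\Omega$, so the paper never needs boundary regularity at all. It simply invokes the interior pointwise gradient bound (Gilbarg--Trudinger)
\[
|\nabla v(x)| \le C(n)\Big(\tfrac{1}{R}\sup_{\partial B(x,R)}|v| + R\sup_{B(x,R)}|f|\Big),\qquad f = \frac{H(\Omega,\Sigma)}{d_\Sigma^2}\,v,
\]
and then uses the \emph{sharper} lemma bound $v \le C\,d_{\partial\Omega}\,d_\Sigma^\alpha$ (not merely $v \le Cd_\Sigma^{\alpha+1}$): since $d_{\partial\Omega}\sim R$ on $B(x,R)$, this gives $\tfrac{1}{R}\sup|v| \le C\sup d_\Sigma^\alpha$, and the second term is handled because $d_\Sigma \sim d_\Sigma(x)$ on $B(x,R)$. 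Your choice $r = d_\Sigma(x_0)/2$ forces the case split and the boundary Schauder machinery, which is heavier, though it has the minor advantage of relying only on the cruder bound $v \le Cd_\Sigma^{\alpha+1}$.

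One small slip to fix: with $r = d_\Sigma(x_0)/2$ the ball $B(x_0,2r)$ has distance exactly zero to $\Sigma$, not positive distance as you write. What you actually need---and what is true---is that $d_\Sigma(x_0+ry)$ is comparable to $r$ on the \emph{smaller} ball (say $|y|\le 1$) where the elliptic estimates are applied; you should state the comparability on that region rather than on all of $\widehat\Omega$.
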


\begin{proof}
The first estimate is obvious from the previous lemma and the fact that $d_{\partial \Omega} \leq d_\Sigma$.

For the second one we proceed as follows. Let $x\in \Omega$ and let $R=d_{\partial \Omega}(x)/3$. Then for every $y\in B(x,R)$ we have that $$2R\leq d_{\partial \Omega}(y) \leq 4R.$$ At this point we invoke a gradient estimate such as $$|\nabla v(x)| \leq C(n) \bigg( \frac{1}{R} \sup_{\partial B(x,R)} |v| + R \sup_{B(x,R)}|f| \bigg),$$ where $f=H(\Omega,\Sigma)v/d_\Sigma^2$, see for example \cite{GT} (paragraph 3.4) for an analogue with cubes. Thus, after some elementary calculations, we get $$|\nabla v (x)| \leq C \sup_{B(x,R)} d_\Sigma^\alpha \leq C (d_\Sigma(x)+R)^\alpha \leq C d_\Sigma^\alpha(x),$$ where in each step constant factors are absorbed in $C$.
\end{proof}

\begin{theorem}
Let $\Omega \subset \mathbb{R}^n$ $(n\geq 2)$ be a smooth bounded domain, and let $\Sigma \subset \partial \Omega$ be a closed submanifold of dimension $s$. Suppose that $H(\Omega,\Sigma)<(n-s)^2/4$. Thus for any $\xi \in \mathfrak{X}^1_c(\mathbb{R}^n)$, $\varphi_t=id+t\xi \in Diff^1_c(\mathbb{R}^n)$ and $H(\varphi_t(\Omega),\varphi_t(\Sigma))<(n-s)^2/4$ for $t$ small enough.

Let $v_t$ be a one-parameter family of positive minimisers for $H(\varphi_t(\Omega),\varphi_t(\Sigma))$, normalised by $$\int_{\varphi_t(\Omega)} \frac{v_t^2}{d_{\varphi_t(\Sigma)}^2}dx = 1,$$ and let $u_t = v_t \circ \varphi_t: \Omega \rightarrow \mathbb{R}$. Then
\begin{equation}
u_t \rightarrow u_0 \textrm{ in } H^1_0(\Omega).
\end{equation}
\end{theorem}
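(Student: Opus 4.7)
The plan is the standard variational argument for continuity of minimizers under domain perturbations: establish uniform $H^1_0$-bounds for the pulled-back family $u_t$, extract a weak limit $u^*$, identify it as a normalized minimizer for $(\Omega,\Sigma)$, and upgrade weak to strong convergence through convergence of norms.

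I would first exploit the change-of-variables identity $y=\varphi_t(x)$: using the normalization of $v_t$ one obtains
$$H(\varphi_t(\Omega),\varphi_t(\Sigma)) = \int_\Omega |(D\varphi_t)^{-\top}\nabla u_t|^2 |\det D\varphi_t|\, dx, \qquad \int_\Omega \frac{u_t^2 |\det D\varphi_t|}{d_{\varphi_t(\Sigma)}^2\circ\varphi_t}\, dx = 1.$$
Since $D\varphi_t\to I$ uniformly on $\textrm{supp}(\xi)\cup\overline{\Omega}$ as $t\to 0$, the first identity combined with Corollary 3.2 yields both a uniform bound on $\|\nabla u_t\|_{L^2(\Omega)}$ and the refined asymptotic $\int_\Omega |\nabla u_t|^2\, dx \to H(\Omega,\Sigma)$. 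After passing to a subsequence, $u_t\rightharpoonup u^*$ weakly in $H^1_0(\Omega)$ and strongly in $L^2(\Omega)$ by Rellich--Kondrachov.

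The crux, and expected main obstacle, is passing to the limit in the Hardy normalization: to show
$$\int_\Omega \frac{u_t^2 |\det D\varphi_t|}{d_{\varphi_t(\Sigma)}^2\circ\varphi_t}\, dx \longrightarrow \int_\Omega \frac{(u^*)^2}{d_\Sigma^2}\, dx,$$
the singularity of $1/d_\Sigma^2$ at $\Sigma$ has to be tamed by the pointwise decay of $u_t$, since $L^2$-convergence alone is insufficient. Here I would apply Theorem 4.5 to each $v_t$, yielding $v_t\leq C_t\, d_{\varphi_t(\Sigma)}^{\alpha_t+1}$ with $\alpha_t$ computed from $H(\varphi_t(\Omega),\varphi_t(\Sigma))$ via Lemma 4.4. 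By continuity of $H$ and the strict inequality $H(\Omega,\Sigma)<(n-s)^2/4$, we have $\alpha_t\to\alpha_0 > (s-n)/2$, and the constants $C_t$ can be kept uniformly bounded for small $t$ by inspection of the proof of Theorem 4.5. Pulling back via $\varphi_t$ and using Lemma 4.1(1) (with $\varphi=\mathrm{id}$) to control $d_{\varphi_t(\Sigma)}\circ\varphi_t$ from below by a constant multiple of $d_\Sigma$, one obtains a uniform domination $u_t^2/d_\Sigma^2 \leq C\, d_\Sigma^{2\alpha_0-\varepsilon}$ for $t$ small and $\varepsilon>0$ chosen so that $2\alpha_0-\varepsilon+(n-s)>0$; a tubular-coordinate computation near $\Sigma$ confirms that the dominating function is in $L^1(\Omega)$. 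Dominated convergence then delivers the claimed limit, so $u^*$ satisfies $\int_\Omega (u^*)^2/d_\Sigma^2\, dx = 1$.

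To finish, weak lower semicontinuity of the Dirichlet energy combined with the normalization just established gives
$$\int_\Omega|\nabla u^*|^2\, dx \leq \liminf_{t\to 0} \int_\Omega|\nabla u_t|^2\, dx = H(\Omega,\Sigma) = R(\Omega,\Sigma)[u^*],$$
so $u^*$ is itself a normalized minimizer and the inequality must be an equality, forcing $\|\nabla u_t\|_{L^2}\to\|\nabla u^*\|_{L^2}$. Weak convergence in $H^1_0$ combined with this norm convergence upgrades to strong convergence. Uniqueness of the positive normalized minimizer (the standard simplicity-of-the-first-eigenvalue argument for the Schrödinger operator $-\Delta - H(\Omega,\Sigma)/d_\Sigma^2$) identifies $u^*=u_0$, and the usual subsequence-of-subsequences argument promotes convergence along a subsequence to convergence of the whole family $u_t$.
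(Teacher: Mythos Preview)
Your proposal is correct and follows essentially the same strategy as the paper: uniform $H^1_0$ bounds from the pulled-back Rayleigh quotient, extraction of a weak/strong $L^2$ limit, passage to the limit in the Hardy normalization via the decay estimate $u_t\le C\,d_{\varphi_t(\Sigma)}^{\alpha+1}$ of Theorem~4.5 together with a tubular-coordinate integrability check, identification of the limit as the unique positive normalized minimizer, and the upgrade to strong $H^1_0$ convergence through norm convergence. The only cosmetic difference is that where you invoke weak lower semicontinuity of $\int|\nabla\cdot|^2$ directly, the paper instead expands via the elementary inequality $|a|^2\ge|b|^2+2b\cdot(a-b)$ applied to $a=(D\varphi_t)^{-\top}\nabla u_t$ and $b=\nabla\tilde u_0$; both routes yield the same conclusion.
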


\begin{proof}
By the normalisation condition on the minimisers, it follows that $\| \nabla v_t \|_{L^2(\varphi_t(\Omega))}=H(\varphi_t(\Omega),\varphi_t(\Sigma))$, thus $\| v_t \|_{H^1_0(\varphi(\Omega))}$ and $\| u_t \|_{H^1_0(\Omega)}$ are uniformly bounded. Hence, possibly passing to a subsequence, there is a $\tilde{u}_0$ such that $$u_t \rightarrow \tilde{u}_0 \textrm{ weakly in } H^1_0,$$ $$u_t \rightarrow \tilde{u}_0 \textrm{ in } L^2.$$

We will show that $\tilde{u}_0$ satisfies the same normalisation condition, i.e. $$\int_\Omega \frac{\tilde{u}_0^2}{d_\Sigma^2}dx =1.$$ This is actually a consequence of the DCT applied on $$\int_\Omega \frac{u_t^2}{d_{\varphi_t(\Sigma)}^2(\varphi_t(x))} |\det D\varphi_t(x)|dx=1,$$ provided it is applicable. Indeed, from the previous estimates, we have that there are $C>0$ and $\alpha$ such that $2\alpha+n-s>0$ such that $$u_t(x)\leq C d_{\varphi_t(\Sigma)}^{\alpha+1}(\varphi_t(x))$$ uniformly in $t$ for $t$ small enough. It follows that $$\frac{u_t^2}{d_{\varphi_t(\Sigma)}^2(\varphi_t(x))} |\det D\varphi_t(x)| \leq C d_{\varphi_t(\Sigma)}^{2\alpha}(\varphi_t(x)).$$ Choosing $\Sigma_\epsilon(t) = \{x\in \Omega: d_{\varphi_t(\Sigma)}(\varphi_t(x)) < \epsilon \}$ and passing to exponential coordinates such that $r(x)=d_{\varphi_t(\Sigma)}(\varphi_t(x))$, we have that $$\int_{\Sigma_\epsilon(t)} d_{\varphi_t(\Sigma)}^{2\alpha}(\varphi_t(x))dx \leq C \int_0^\epsilon r^{2\alpha+n-s-1}dr,$$ where the integral of the RHS is convergent since $2\alpha+n-s-1>-1$. Hence the integrand is uniformly bounded in $t$ by an integrable function, and the claim follows.

From vector inequality $|a|^2 \geq |b|^2+2b\cdot (a-b)$, it follows that $$H(\varphi_t(\Omega),\varphi_t(\Sigma)) = \int_\Omega |(D\varphi_t)^{-\top} \nabla u_t|^2 |\det D\varphi_t|dx \geq$$ $$\int_\Omega |\nabla \tilde{u}_0|^2 |\det D\varphi_t|dx+2\int_\Omega \nabla \tilde{u}_0 \cdot ((D\varphi_t)^{-\top} \nabla u_t - \nabla \tilde{u}_0) |\det D\varphi_t|dx.$$ By the DCT and the continuity of $H$, it follows that $$H(\Omega,\Sigma)\geq \int_\Omega |\nabla \tilde{u}_0|^2 dx,$$ so $\tilde{u}_0$ must be a positive normalised minimiser, and by the uniqueness of such minimisers it follows that $\tilde{u}_0 = u_0$.

Moreover, also by the DCT, we have that $$\lim_{t\rightarrow 0} \bigg( H(\varphi_t(\Omega),\varphi_t(\Sigma))-\int_\Omega |\nabla u_t|^2 dx \bigg) =$$ $$\lim_{t\rightarrow 0} \int_\Omega \bigg( |(D\varphi_t)^{-\top} \nabla u_t|^2 |\det D\varphi_t| - |\nabla u_t|^2 \bigg) dx =0,$$ so it follows that $$\lim_{t\rightarrow 0} \int_\Omega |\nabla u_t|^2 dx = H(\Omega,\Sigma) = \int_\Omega |\nabla u_0|dx.$$ Since weak convergence and convergence in norm imply strong convergence, the proof is complete.
\end{proof}

\begin{theorem}
Let $\Omega \subset \mathbb{R}^n$ $(n\geq 2)$ be a smooth bounded domain, and let $\Sigma \subset \partial \Omega$ be a closed submanifold of dimension $s$. Suppose that $H(\Omega,\Sigma)<(n-s)^2/4$ and let $v$ be a minimiser that achieves $H(\Omega,\Sigma)$ normalised by $$\int_{\Omega} \frac{v^2}{d_{\Sigma}^2}dx = 1.$$ Then the map $H: \varphi \mapsto H(\varphi(\Omega),\varphi(\Sigma))$ is differentiable at $id_{\mathbb{R}^n}$ and $$D_{id}H(\xi) = \int_\Omega [ |\nabla v|^2 div(\xi) -2 (D\xi)\nabla v \cdot \nabla v ]dx$$ $$+H(\Omega,\Sigma) \int_\Omega \bigg[2 \frac{v^2}{d^3_\Sigma} \nabla d_\Sigma \cdot (\xi-\xi\circ \sigma) - \frac{v^2}{d_\Sigma^2} div(\xi) \bigg] dx,$$ where $\sigma(x)$ is the (a.e unique) point in $\Sigma$ such that $d_\Sigma(x)=|x-\sigma(x)|$.
\end{theorem}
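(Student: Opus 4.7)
The strategy is to use the variational characterization of the Hardy constant to sandwich $h(s) := H(\varphi_s(\Omega),\varphi_s(\Sigma))$ between two quotients that are smooth in $s$. Writing $\varphi_s = \mathrm{id} + s\xi$, letting $v_s$ be the positive normalized minimizer at time $s$ and $u_s = v_s\circ\varphi_s \in H_0^1(\Omega)$, introduce the pulled-back Rayleigh quotient
$$R(u,s) = \frac{\mathcal{N}(u,s)}{\mathcal{D}(u,s)}, \quad \mathcal{N}(u,s)= \int_\Omega |(D\varphi_s)^{-\top}\nabla u|^2 |\det D\varphi_s|\, dx, \quad \mathcal{D}(u,s) = \int_\Omega \frac{u^2 |\det D\varphi_s|}{d_{\varphi_s(\Sigma)}^2\circ\varphi_s}\,dx,$$
so that $h(s) = R(u_s,s)$ with $\mathcal{D}(u_s,s)=1$. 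Testing $R(\cdot,t)$ against $u_0=v$ and $R(\cdot,0)$ against $u_t$ yields the sandwich
$$R(u_t,t) - R(u_t,0) \,\leq\, h(t) - h(0) \,\leq\, R(u_0,t) - R(u_0,0),$$
which after division by $t$ and passing to one-sided limits will pinch $h'(0)$ provided both ends are identified.

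For the upper bound, I would differentiate $s\mapsto R(v,s)$ at $s=0$ directly. The numerator contribution comes from $\frac{d}{ds}\big|_{s=0}(D\varphi_s)^{-\top}(D\varphi_s)^{-1} = -(D\xi + D\xi^\top)$ and $\frac{d}{ds}\big|_{s=0}|\det D\varphi_s| = \operatorname{div}(\xi)$, giving $\partial_s \mathcal{N}(v,s)\big|_{s=0} = \int_\Omega[|\nabla v|^2 \operatorname{div}(\xi) - 2(D\xi)\nabla v\cdot \nabla v]\,dx$. For the denominator, the product rule in $s$ combined with the first Lemma of the section (which computes $\partial_s[d_{\varphi_s(\Sigma)}^2\circ\varphi_s]\big|_{s=0} = 2(x-\sigma)\cdot(\xi-\xi\circ\sigma)$), together with the a.e.\ identity $(x-\sigma(x))/d_\Sigma(x) = \nabla d_\Sigma(x)$, produces
$$\partial_s \mathcal{D}(v,s)\big|_{s=0} = \int_\Omega \frac{v^2 \operatorname{div}(\xi)}{d_\Sigma^2}\,dx - 2\int_\Omega \frac{v^2 \nabla d_\Sigma \cdot (\xi-\xi\circ \sigma)}{d_\Sigma^3}\,dx.$$
Applying the quotient rule with $\mathcal{D}(v,0)=1$ and $\mathcal{N}(v,0)=H(\Omega,\Sigma)$ assembles these into the claimed expression $\Lambda(\xi) := \partial_s\mathcal{N}(v,s)\big|_{s=0} - H(\Omega,\Sigma)\,\partial_s \mathcal{D}(v,s)\big|_{s=0}$.

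For the lower bound, I would apply the mean value theorem to write $R(u_t,t) - R(u_t,0) = t\,\partial_s R(u_t,s)\big|_{s=\theta_t t}$ for some $\theta_t\in(0,1)$, and show this derivative tends to $\Lambda(\xi)$ as $t\to 0$. The strong convergence $u_t\to v$ in $H_0^1(\Omega)$, supplied by the preceding theorem on stability of minimizers, handles the numerator piece through continuity of the smooth coefficients in $s$. The singular terms in $\partial_s \mathcal{D}$ demand a uniform integrable majorant: pulling back the pointwise bound $v_t \leq C\,d_{\varphi_t(\Sigma)}^{\alpha_t+1}$ from the earlier theorem through $\varphi_t$ gives $u_t(x)\leq C\,d_{\varphi_t(\Sigma)}^{\alpha_t+1}(\varphi_t(x))$, while the Lipschitz bound $|\xi(x)-\xi(\sigma_s(x))|\leq \|D\xi\|_\infty d_{\varphi_s(\Sigma)}(\varphi_s(x))$ absorbs one singular factor, leaving integrands controlled by a multiple of $d_\Sigma^{2\alpha}$ near $\Sigma$. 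This is integrable precisely because the standing hypothesis $H(\Omega,\Sigma) < (n-s)^2/4$ yields $2\alpha + n - s > 0$. Combined with the a.e.\ convergence $\sigma_s(x)\to \sigma(x)$ already established in the first Lemma of this section, dominated convergence yields $\partial_s R(u_t,s)\big|_{s=\theta_t t}\to \Lambda(\xi)$, closing the sandwich.

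The main obstacle is verifying that the pointwise minimizer bound $v_t \leq C\,d_{\varphi_t(\Sigma)}^{\alpha_t+1}$ can be taken with constants $C$ and exponents $\alpha_t$ uniform for $t$ in a neighbourhood of zero. This reduces to tracing the dependence of the constants in the cited estimate on the triple $(\varphi_t(\Omega),\varphi_t(\Sigma),h(t))$ and observing that all three vary continuously with $t$, the last being precisely the continuity result of Section 3. Once this uniformity is in hand, the sandwich pinches and $h'(0)=\Lambda(\xi)$, which is the formula displayed in the statement.
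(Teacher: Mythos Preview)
Your proposal is correct and follows essentially the same route as the paper: the same pulled-back Rayleigh quotient, the same sandwich $R_t[u_t]-R_0[u_t]\le h(t)-h(0)\le R_t[u_0]-R_0[u_0]$, the mean value theorem in $t$, and then DCT together with the $H^1_0$-stability of minimizers to force both ends to the same limit $R_0'[u_0]$. Your treatment is in fact slightly more explicit than the paper's about the uniform integrable majorant required for the singular denominator term (via the bound $u_t\le C\,d_{\varphi_t(\Sigma)}^{\alpha_t+1}\circ\varphi_t$ and the integrability of $d_\Sigma^{2\alpha}$ when $2\alpha+n-s>0$), a point the paper passes over with a bare ``by DCT''.
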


\begin{proof}
Let $\varphi_t = id + t\xi$ and $v_t$ a sequence of positive normalised minimisers as before. By the definition of the Hardy constant and change of variables, we have that $$H(\Omega,\Sigma)= \min_{u\in H^1_0\setminus \{ 0 \} } R_t[u],$$ where $R_t[u]=N_t[u]/D_t[u]$, $$N_t[u] = \int_\Omega |(D\varphi_t)^{-\top} \nabla u|^2 |\det D\varphi_t|dx,$$ $$D_t[u] = \int_\Omega \frac{u^2}{d_{\varphi_t(\Sigma)}\circ \varphi_t} |\det D\varphi_t|dx.$$ Since $v_t$ achieves $H(\varphi_t(\Omega),\varphi_t(\Sigma))$, we have that $H(\varphi_t(\Omega),\varphi_t(\Sigma)) = R_t[u_t]$, where $u_t=v_t\circ \varphi_t$ as before.

It follows, by the definition of the Hardy constant, that $$R_t[u_t]-R_0[u_t]\leq H(\varphi_t(\Omega),\varphi_t(\Sigma)) - H(\Omega,\Sigma) \leq R_t[u_0] - R_0[u_0].$$ Now, $R_t[u]$ is a function of two arguments, a real number $t$ and a function $u$. The partial derivative of this function with respect to $t$ is denoted by $R_t'[u]$. The last inequality together with the mean value theorem on the first argument of $R$ imply that there are numbers $\xi(t)$ and $\eta(t)$ such that $|\xi(t)|,|\eta(t)| < |t|$ and $$R_{\xi(t)}'[u_t]t \leq H(\varphi_t(\Omega),\varphi_t(\Sigma)) - H(\Omega,\Sigma) \leq R_{\eta(t)}'[u_0].$$

If we show that $R_{\xi(t)}'[u_t]t$ and $R_{\eta(t)}'[u_0]$ converge to the same number as $t\rightarrow 0$, differentiability at $t=0$ is established. Some basic calculations reveal that $$\frac{d}{dt} |(D\varphi)^{-\top} \nabla u |^2 = -2 (D\varphi_t)^{-1} D\xi (D\varphi_t)^{-1} (D\varphi_t)^{-\top} \nabla u \cdot \nabla u,$$ $$\frac{d}{dt} |\det D\varphi_t| = \frac{div(\xi)}{|\det D\varphi_t^{-1}\circ \varphi_t|}.$$ It follows that $$N_t'[u] = \int_\Omega |(D\varphi_t)^{-\top} \nabla u|^2 \frac{div(\xi)}{|\det D\varphi_t^{-1}\circ \varphi_t|} dx $$ $$-2 \int_\Omega (D\varphi_t)^{-1} D\xi (D\varphi_t)^{-1} (D\varphi_t)^{-\top} \nabla u \cdot \nabla u |\det D\varphi_t|dx,$$ and $$D_t'[u] = \int_\Omega \frac{u^2}{d_{\varphi_t(\Sigma)}\circ \varphi_t} \frac{div(\xi)}{|\det D\varphi_t^{-1}\circ \varphi_t|} dx$$ $$-2\int_\Omega \frac{u^2 \nabla d_{\varphi_t(\Sigma)}\circ \varphi_t \cdot (\xi - \xi\circ \sigma_t)}{d_{\varphi_t(\Sigma)}\circ \varphi_t} |\det D\varphi_t|dx.$$ By DCT, it follows that $$\lim_{t\rightarrow 0} R_{\eta(t)}'[u_0] = R_0'[u_0],$$ and by DCT together with the previous stability result, we also have $$\lim_{t\rightarrow 0} R_{\xi(t)}'[u_t] = R_0'[u_0]$$ and the claim is proven.

It remains to compute the derivative. We have $$\frac{d}{dt} \bigg|_{t=0} H(\varphi_t(\Omega),\varphi_t(\Sigma)) = \frac{N_0'[u_0]D_0[u_0]-N_0[u_0]D_0'[u_0]}{D_0^2[u_0]}$$ $$=N_0'[u_0] - H(\Omega,\Sigma) D_0'[u],$$ the last equality being valid due to normalisation. The result immediately follows from the previous calculations, putting $t=0$ and taking into account that $u_0=v$.
\end{proof}

\section{Differentiability with respect to boundary diffeomorphisms}

Finally, we turn our attention to the matter of differentiability of the map $$\varphi \longmapsto H(\Omega,\varphi(\Sigma)),$$ for $\varphi\in Diff^1(\partial \Omega)$. Note that in the case where $s=n-1$, this problem is irrelevant since the boundary as a whole remains invariant under boundary diffeomorphisms, so in this sense it is new.

First we establish a continuity result. In particular, if $\varphi\in Diff^1(\partial \Omega)$, the map $\varphi \mapsto H(\Omega, \varphi(\Sigma))$ is shown to be continuous with respect to the $C^1$ topology.

\begin{theorem}
Let $\Omega \subset \mathbb{R}^n$ be a bounded open set with smooth non-empty boundary, and let $\Sigma \subset \partial \Omega$ be an arbitrary subset of the boundary. Then there exist $\epsilon>0$ and $c>0$ such that for any $\varphi\in Diff^1(\partial \Omega)$ satisfying $\| \varphi - id \|_{C^1(\partial \Omega)} <\epsilon$, the estimate
\begin{equation}
|H(\Omega,\varphi(\Sigma))-H(\Omega,\Sigma)|\leq c H(\Omega,\Sigma) \| \varphi - Id \|_{C^1(\partial\Omega)}
\end{equation}
holds.
\end{theorem}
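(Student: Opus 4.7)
The plan is to reduce the statement to the first continuity theorem of Section 3. The key step is to extend the boundary diffeomorphism $\varphi$ to an ambient $C^1$ diffeomorphism $\tilde{\varphi}$ of $\mathbb{R}^n$ that preserves $\Omega$ and whose global $C^1$ size is controlled by that of $\varphi$ on $\partial\Omega$; that is, to produce $\tilde{\varphi}$ with $\tilde{\varphi}|_{\partial\Omega}=\varphi$, $\tilde{\varphi}(\Omega)=\Omega$, and
$$
\|D\tilde{\varphi}-I\|_{L^\infty(\mathbb{R}^n)}\leq C\|\varphi-id\|_{C^1(\partial\Omega)},
$$
where $C$ depends only on $\Omega$. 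Once this is achieved, applying estimate \eqref{cont} to $\tilde{\varphi}$ and using $\tilde{\varphi}(\Omega)=\Omega$, $\tilde{\varphi}(\Sigma)=\varphi(\Sigma)$ yields
$$
|H(\Omega,\varphi(\Sigma))-H(\Omega,\Sigma)|=|H(\tilde{\varphi}(\Omega),\tilde{\varphi}(\Sigma))-H(\Omega,\Sigma)|\leq c\,H(\Omega,\Sigma)\|D\tilde{\varphi}-I\|_{L^\infty},
$$
which combines with the extension bound to give the claim.

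To construct $\tilde{\varphi}$, I use a tubular neighborhood: since $\partial\Omega$ is smooth and compact, there exists $\delta>0$ such that $(p,t)\mapsto p+t\nu(p)$ is a $C^1$ diffeomorphism from $\partial\Omega\times(-\delta,\delta)$ onto an open neighborhood $U$ of $\partial\Omega$, where $\nu$ is the inward unit normal. On $U$, define
$$
\Phi\bigl(p+t\nu(p)\bigr)=\varphi(p)+t\,\nu(\varphi(p)),
$$
which is well defined whenever $\|\varphi-id\|_{C^0(\partial\Omega)}$ is small, agrees with $\varphi$ on $\partial\Omega$, and preserves the sign of $t$; in particular $\Phi(U\cap\Omega)\subset\Omega$. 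Computations in these coordinates give $\|\Phi-id\|_{C^1(U)}\leq C_0\|\varphi-id\|_{C^1(\partial\Omega)}$. Next, fix once and for all a smooth cutoff $\chi:\mathbb{R}^n\to[0,1]$ supported in $U$ and equal to $1$ on a smaller tubular neighborhood of $\partial\Omega$, and set
$$
\tilde{\varphi}(x)=x+\chi(x)\bigl(\Phi(x)-x\bigr).
$$
Then $\tilde{\varphi}$ is the identity outside $U$, equals $\varphi$ on $\partial\Omega$, and
$$
D\tilde{\varphi}-I=\nabla\chi\otimes(\Phi-id)+\chi\,(D\Phi-I),
$$
so $\|D\tilde{\varphi}-I\|_{L^\infty}\leq C_1\|\varphi-id\|_{C^1(\partial\Omega)}$ for a purely geometric constant $C_1$. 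For $\|\varphi-id\|_{C^1(\partial\Omega)}$ small enough, this forces $\tilde{\varphi}$ to be a $C^1$ diffeomorphism of $\mathbb{R}^n$ (local invertibility from $\|D\tilde{\varphi}-I\|_{L^\infty}<1$, global bijectivity since $\tilde{\varphi}$ equals the identity outside the compact set $\overline{U}$). Because $\Phi$ preserves the sign of $t$ and $\chi$ is supported in $U$, we moreover obtain $\tilde{\varphi}(\Omega)=\Omega$.

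The main obstacle is precisely the construction of this extension: one needs $C^1$-smallness and preservation of $\Omega$ simultaneously. The tubular-neighborhood ansatz handles both at once, because translation along the normal preserves the signed distance to $\partial\Omega$, and the cutoff confines the perturbation to $U$ while keeping the $C^1$-deviation driven only by the boundary data. A secondary point is that estimate \eqref{cont} requires smallness of $\|D\tilde{\varphi}-I\|_{L^\infty(co_{\tilde{\varphi}}(\Omega))}$, but since $\Omega$ is bounded $co_{\tilde{\varphi}}(\Omega)$ is bounded, and our estimate on $D\tilde{\varphi}-I$ is uniform on all of $\mathbb{R}^n$, so this causes no trouble.
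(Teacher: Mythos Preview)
Your proof is correct and follows the same strategy as the paper: extend the boundary diffeomorphism to a compactly supported ambient $C^1$ diffeomorphism preserving $\Omega$, with $C^1$-deviation from the identity controlled by $\|\varphi-id\|_{C^1(\partial\Omega)}$, and then invoke estimate \eqref{cont}. The only difference is the explicit extension---the paper uses a geodesic homotopy $h(x,t)=\exp_x(t\exp_x^{-1}\varphi(x))$ on $\partial\Omega$ cut off in the normal coordinate, while you use the normal-translation map $p+t\nu(p)\mapsto\varphi(p)+t\nu(\varphi(p))$ blended with the identity by an ambient cutoff; note that your claim $\tilde\varphi(\Omega)=\Omega$ in the transition region $\{0<\chi<1\}$ deserves one extra line (there $d(x,\partial\Omega)$ is bounded below by a fixed constant while $|\tilde\varphi(x)-x|\leq C\|\varphi-id\|_{C^0}$ is small, so $\tilde\varphi(x)$ stays on the same side of $\partial\Omega$).
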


\begin{proof}
This can actually be reduced to the first case. One simply needs to extend diffeomorphisms of the boundary to diffeomorphisms of the ambient space. This cannot be done for an arbitrary diffeomorphism, but for small diffeomorphisms it is achievable since $Diff^1(\partial \Omega)$ is locally contractible. Indeed, for $\| \varphi - id \|_{C^0}< inj(\partial \Omega)$ (the injectivity radious of $\partial \Omega$ is a positive number since $\partial \Omega$ is compact), define a homotopy $h:\partial \Omega \times [0,1] \rightarrow \partial \Omega$, $$h(x,t)=\exp_x(t\exp_x^{-1}(\varphi(x))),$$ where $\exp$ stands for the exponential map of $\partial \Omega$ as a Riemannian submanifold of $\mathbb{R}^n$, while the assumption above ensures that $h(\cdot,t)$ remains a diffeomorphism for all $t$.

We now pick a neighbourhood of $\partial \Omega$ that is diffeomorphic to $\partial \Omega \times (-\epsilon,\epsilon)$, and a cut-off function $f:(-\epsilon,\epsilon)\rightarrow \mathbb{R}$ that is $1$ in a neighbourhood of $0$. Then $$\Phi(x,y) = h(x,1-f(y))$$ is a diffeomorphism of $\mathbb{R}^n$ with compact support that extends $\varphi$ (extend trivially outside the neighbourhood by the identity). We then apply \eqref{cont} for $\Phi$ and the result follows from the fact that $$\| \Phi - Id_{\mathbb{R}^n} \|_{C^1} \leq c \| \varphi - Id_{\partial \Omega} \|_{C^1},$$ which is obvious by the construction.
\end{proof}

Similar to the Euclidean case, $Diff^k(\partial\Omega)$ has a differential structure that is locally homeomorphic to the Banach space $\mathfrak{X}^k(\partial\Omega)$ (note that here we need not take vector fields with compact support since $\partial\Omega$ is by assumption compact). The differential of a map $h:Diff^k(\partial\Omega) \rightarrow \mathbb{R}$ at $\varphi \in Diff^k(\partial\Omega)$ along $\xi \in \mathfrak{X}^k(\partial\Omega)$ is given by $$D_\varphi h(\xi) = \frac{d}{dt}\bigg|_{t=0} h(\exp(t\xi) \circ \varphi),$$ provided that the limit exists, where $\exp(t\xi) \in Diff^k(\partial\Omega)$ is the map obtained by exponential mapping along $\xi$, which is always a diffeomorphism for $t$ small enough due to compactness.

We finally show that the Hardy constant is Gateaux differentiable with respect to such boundary diffeomorphisms.

\begin{theorem}
Let $\Omega \subset \mathbb{R}^n$ be bounded and of smooth boundary. Then the map $h:Diff^1(\partial\Omega) \rightarrow \mathbb{R}$, $\varphi \mapsto H(\Omega,\varphi(\Sigma))$ is differentiable at all points where $H(\Omega,\varphi(\Sigma))<(n-s)^2/4$.
\end{theorem}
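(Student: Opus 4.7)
The plan is to reduce the boundary-diffeomorphism problem to the ambient-diffeomorphism differentiability established in the previous theorem, by extending each boundary diffeomorphism to an ambient one that preserves $\Omega$. First, by the definition of the directional derivative together with the group action of $Diff^1(\partial\Omega)$ on subsets,
$$D_\varphi h(\xi) = \frac{d}{dt}\bigg|_{t=0} H(\Omega,\exp(t\xi)(\Sigma')), \quad \Sigma' := \varphi(\Sigma),$$
so it suffices to prove differentiability at $\varphi = id$ for the pair $(\Omega, \Sigma')$.

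Given $\xi \in \mathfrak{X}^1(\partial\Omega)$, I construct a family $\Phi_t \in Diff^1_c(\mathbb{R}^n)$ with $\Phi_0 = id$, $\Phi_t|_{\partial\Omega} = \exp(t\xi)$, and $\Phi_t(\Omega) = \Omega$, using the tubular-neighbourhood/cut-off scheme from the proof of the continuity theorem. Parametrising a neighbourhood of $\partial\Omega$ as $\partial\Omega \times (-\epsilon,\epsilon)$ and choosing $g \in C^1_c(-\epsilon,\epsilon)$ with $g(0) = 1$, set
$$\Phi_t(x,y) = \bigl(h_t(x,g(y)),\,y\bigr),\quad h_t(x,s) = \exp_x\bigl(s\,\exp_x^{-1}(\exp(t\xi)(x))\bigr),$$
and extend by the identity outside. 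A short Taylor expansion at $t=0$ gives
$$\frac{d}{dt}\bigg|_{t=0}\Phi_t(x,y) = g(y)\xi(x) =: X(x,y) \in \mathfrak{X}^1_c(\mathbb{R}^n),\qquad X|_{\partial\Omega} = \xi.$$
Since $\Phi_t$ preserves $\partial\Omega$ (as a set) and is close to the identity for small $t$, it preserves $\Omega$ as well, so
$$H(\Omega,\exp(t\xi)(\Sigma')) = H(\Phi_t(\Omega),\Phi_t(\Sigma')).$$

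The previous differentiability theorem furnishes the derivative at $t=0$ along the \emph{linear} path $\psi_t = id + tX$, not along $\Phi_t$. To transfer, I expand $\Phi_t = \psi_t + O(t^2)$ in $C^1$; a direct computation then yields $\|\Phi_t \circ \psi_t^{-1} - id\|_{C^1} = O(t^2)$, and applying the continuity estimate \eqref{cont} to $\Phi_t \circ \psi_t^{-1}$ on the pair $(\psi_t(\Omega),\psi_t(\Sigma'))$ gives
$$\bigl|H(\Phi_t(\Omega),\Phi_t(\Sigma')) - H(\psi_t(\Omega),\psi_t(\Sigma'))\bigr| = O(t^2).$$
The two families therefore share the same derivative at $t=0$, which by the previous theorem equals $D_{id}H(X)$. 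This proves differentiability of $h$ at $id$, hence at any $\varphi$.

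The main obstacle is precisely this $O(t^2)$ $C^1$-comparison and the use of the Lipschitz-type continuity estimate to transfer the derivative from the geodesic-homotopy extension $\Phi_t$ to its linear approximation. The hypothesis $H(\Omega,\varphi(\Sigma)) < (n-s)^2/4$ propagates to a $C^1$-neighbourhood of $\varphi$ by the continuity theorem, so minimisers are available along the path for all small $|t|$ and the previous differentiability theorem applies. A consistency observation (automatic from the reduction, though not obvious from the explicit formula) is that $D_{id}H(X)$ depends only on $X|_{\partial\Omega} = \xi$, since the whole construction depends on $\xi$ only through the boundary diffeomorphism $\exp(t\xi)$ it generates.
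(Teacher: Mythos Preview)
Your argument follows the same overall strategy as the paper: reduce to $\varphi = id$, extend the boundary flow $\exp(t\xi)$ to an ambient family of diffeomorphisms preserving $\Omega$, and invoke the ambient differentiability theorem. The paper carries out the extension slightly differently---it extends the \emph{vector field} $\xi$ to some $\Xi\in\mathfrak{X}^1_c(\mathbb{R}^n)$ tangent to $\partial\Omega$ (choosing a metric on the collar that makes $\partial\Omega$ totally geodesic so that the ambient flow $\exp(t\Xi)$ restricts to $\exp(t\xi)$ on $\partial\Omega$), and then simply asserts the chain rule
\[
\frac{d}{dt}\Big|_{t=0} H(\exp(t\Xi)(\Omega),\exp(t\Xi)(\Sigma)) \;=\; D_{id_{\mathbb{R}^n}}H(\Xi).
\]
Your version instead extends the \emph{diffeomorphism} directly via the homotopy from the continuity theorem, and---this is the substantive addition---you make the passage from the curved path $\Phi_t$ to the linear path $\psi_t = id + tX$ explicit by using the Lipschitz estimate \eqref{cont} on $\Phi_t\circ\psi_t^{-1}$. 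This is a genuine improvement: the ambient theorem only gives the Gateaux derivative along \emph{linear} perturbations, so some such comparison is needed, and the paper leaves it implicit. One small correction: with $\xi$ merely $C^1$ you should only expect $\Phi_t = \psi_t + o(t)$ in $C^1$, not $O(t^2)$; but $o(t)$ is already enough for the two difference quotients to share the same limit, so the argument goes through unchanged.
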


\begin{proof}
Without loss of generality, let $\varphi=id_{\partial \Omega}$, and let $\xi\in \mathfrak{X}^1(\partial\Omega)$. Then one can extend $\xi$ to a $\Xi\in \mathfrak{X}^1_c(\mathbb{R}^n)$ (using a standard argument involving partitions of unity, for example). One can also assume that the support of $\Xi$ lies within a neigbourhood of the form $\partial\Omega\times (-\epsilon,\epsilon)$, equipped with a metric such that $\partial \Omega$ is a totally geodesic submanifold. Then we have that $$D_{id_{\partial \Omega}} h(\xi) = \frac{d}{dt}\bigg|_{t=0} h(\exp(t\xi))= \frac{d}{dt}\bigg|_{t=0} H(\Omega,\exp(t\xi)(\Sigma)).$$ Since $\exp(t\xi)(\Omega)=\Omega$ and $\exp(t\Xi) |_{\partial\Omega}=\exp(t\xi)$, it follows that $$D_{id_{\partial \Omega}} h(\xi) = \frac{d}{dt}\bigg|_{t=0} H(\exp(t\Xi)(\Omega), \exp(t\xi)(\Sigma))$$ $$= D_{id_{\mathbb{R}^n}}H(\frac{d}{dt}\bigg|_{t=0} \exp(t\Xi) )= D_{id_{\mathbb{R}^n}}H(\Xi),$$ where in the last equalities we regard $H$ as the function $\varphi\mapsto H(\varphi(\Omega),\varphi(\Sigma))$ as discussed in the previous section.
\end{proof}

There is a particularly neat way to express this form of differentiability in the special case $s=0$ (a point boundary singularity).

\begin{corollary}
Let $\Omega \subset \mathbb{R}^n$ be bounded and of smooth boundary. Then the map $H:\partial\Omega \rightarrow \mathbb{R}$, $\sigma \mapsto H(\Omega,\{\sigma\})$ is differentiable at every $\sigma\in \partial \Omega$ where $H(\Omega,\{ \sigma \})<(n-s)^2/4$.
\end{corollary}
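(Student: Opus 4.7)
The plan is to bootstrap the Gateaux differentiability of $h: \varphi \mapsto H(\Omega, \varphi(\{\sigma_0\}))$ on $\mathrm{Diff}^1(\partial\Omega)$ established in the preceding theorem into ordinary differentiability of $\tilde{H}: \sigma \mapsto H(\Omega, \{\sigma\})$ on the finite-dimensional manifold $\partial\Omega$. The key observation is that any smooth variation of the singularity $\sigma$ inside $\partial\Omega$ is realized as the orbit of a one-parameter family of boundary diffeomorphisms acting on a fixed base point.

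The first step is to establish local Lipschitz continuity of $\tilde{H}$. For $\sigma_1, \sigma_2 \in \partial\Omega$ close enough, one constructs a diffeomorphism $\varphi_{\sigma_1, \sigma_2} \in \mathrm{Diff}^1(\partial\Omega)$ sending $\sigma_1$ to $\sigma_2$ with $\| \varphi_{\sigma_1, \sigma_2} - \mathrm{id} \|_{C^1(\partial\Omega)} \leq C |\sigma_1 - \sigma_2|$ uniformly in a neighborhood, for instance by flowing along a suitable cut-off of the geodesic vector field joining $\sigma_1$ and $\sigma_2$ on $\partial\Omega$. The continuity estimate for boundary diffeomorphisms proved earlier in Section 5 then yields $|\tilde{H}(\sigma_1) - \tilde{H}(\sigma_2)| \leq C |\sigma_1 - \sigma_2|$.

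Next, fix $\sigma_0$ with $\tilde{H}(\sigma_0) < n^2/4$ and a tangent vector $\xi_0 \in T_{\sigma_0}\partial\Omega$. Extending $\xi_0$ to a $C^1$ vector field $\xi$ on $\partial\Omega$ with $\xi(\sigma_0) = \xi_0$ via a chart and a partition of unity, the preceding theorem produces the directional derivative $\frac{d}{dt}\big|_{t=0} \tilde{H}(\exp(t\xi)(\sigma_0)) = D_{\mathrm{id}_{\partial\Omega}} h(\xi)$, where the curve $\exp(t\xi)(\sigma_0)$ has initial velocity $\xi_0$. Independence from the choice of extension follows from the Lipschitz bound of Step 1: two vector fields agreeing at $\sigma_0$ produce flow curves through $\sigma_0$ that agree up to $o(t)$, so the $\tilde{H}$-values along them differ by $o(t)$, forcing the two directional derivatives to coincide. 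Denote this common value by $L(\xi_0)$; linearity in $\xi_0$ is inherited from linearity of $D_{\mathrm{id}_{\partial\Omega}} h$ in the vector field (since sums and scalar multiples of extensions extend the corresponding sums and scalar multiples of tangent vectors).

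Finally, Gateaux differentiability with a linear derivative, combined with local Lipschitz continuity, yields Fr\'{e}chet differentiability at $\sigma_0$ in local coordinates via the standard compactness argument on the unit sphere of directions. The main obstacle I anticipate is the construction in the first step of ``translation'' diffeomorphisms with uniform $C^1$-control; this is technical but routine, relying on the Riemannian exponential map of $\partial\Omega$ combined with a fixed cutoff based on the injectivity radius, and on the compactness and smoothness of $\partial\Omega$.
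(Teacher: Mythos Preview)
Your argument is correct, and it follows the same route the paper implicitly takes: the corollary is stated without proof in the paper, as a direct specialization of the preceding theorem on boundary diffeomorphisms to the case $s=0$, the point being that moving the singleton $\{\sigma\}$ inside $\partial\Omega$ is a particular instance of the action of $\mathrm{Diff}^1(\partial\Omega)$ on $\Sigma$.

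What you add is genuine content the paper glosses over. First, the Lipschitz estimate in Step~1 (via translation diffeomorphisms with controlled $C^1$-norm and the continuity bound of Section~5) is exactly what is needed to make sense of the passage from the infinite-dimensional Gateaux derivative $D_{\mathrm{id}}h(\xi)$ to a well-defined functional $L(\xi_0)$ on the finite-dimensional tangent space $T_{\sigma_0}\partial\Omega$; your independence-of-extension argument is clean. Second, your Step~3 upgrade from Gateaux to Fr\'echet is correct: the Lipschitz bound makes the family $v\mapsto (\tilde H(\exp_{\sigma_0}(tv))-\tilde H(\sigma_0))/t$ equicontinuous on the unit sphere of $T_{\sigma_0}\partial\Omega$, and equicontinuity together with pointwise convergence to the continuous (indeed linear) limit $L$ forces uniform convergence, which is Fr\'echet differentiability. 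The paper, working throughout with Gateaux differentiability only, does not make this upgrade explicit; your version is the sharper statement.
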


\paragraph{\textbf{Acknowledgements}} Special thanks are owed to my PhD supervisor, Professor G. Barbatis, for the time he spent reviewing the article and offering useful suggestions. This research was supported by the Hellenic Foundation for Research and Innovation (HFRI) under the HFRI PhD Fellowship grant (Fellowship Number 1250).




\bibliographystyle{model1-num-names}
\appendix
\section*{Bibliography}
\bibliography{sample.bib}

\begin{thebibliography}{00}

\bibitem{A} Avkhadiev, F. G. (2014). A geometric description of domains whose Hardy constant is equal
to 1/4. Izv. Math. 78, no. 5, 855–876

\bibitem{Ba} Banyaga, A. (1997). The Structure of Classical Diffeomorphism Groups. Kluwer Academic Publishers.

\bibitem{BFT1} Barbatis, G., Filippas, S., Tertikas, A. (2004). A unified approach to improved
Lp Hardy inequalities with best constants. Trans. Amer. Math. Soc. 356, no. 6,
2169–2196.

\bibitem{BFT2} Barbatis, G., Filippas, S., Tertikas, A. (2018). Sharp Hardy and Hardy-Sobolev inequalities
with point singularities on the boundary. J. Math. Pures Appl. (9) 117, 146–184.

\bibitem{BL} Barbatis, G., Lamberti, P.D. (2014). Shape sensitivity analysis of the Hardy constant. Nonlinear Analysis: Theory, Methods \& Applications, Volume 103, Pages 98-112.

\bibitem{C} Cazacu, C. (2011). On Hardy inequalities with singularities on the boundary.
C. R. Math. Acad. Sci. Paris 349, no. 5-6, 273–277.

\bibitem{CV} Chen, H., Véron, L. (2020). Schrödinger operators with Leray-Hardy potential
singular on the boundary. J. Differential Equations 269, no. 3, 2091–2131

\bibitem{D} Davies, E.B. (1995). The Hardy Constant. The Quarterly Journal of Mathematics, Volume 46, Issue 4, 417–431.

\bibitem{F} Fall, M. (2012) On the Hardy-Poincaré inequality with boundary
singularities. Commun. Contemp. Math. 14, no. 3, 1250019, 13 pp.

\bibitem{FM} Fall, M., Mahmoudi, F. (2012). Weighted Hardy inequality with higher dimensional singularity on the boundary. Calculus of Variations and Partial Differential Equations, 50, 779-798.

\bibitem{FMu} Fall, M., Musina, R. (2012). Hardy—Poincaré inequalities with boundary singularities. Proceedings of the Royal Society of Edinburgh: Section A Mathematics, 142(4), 769-786.

\bibitem{G} Gkikas, K. (2013). Hardy-Sobolev inequalities in unbounded domains and heat
kernel estimates. J. Funct. Anal. 264, no. 3, 837–893.

\bibitem{GT} Gilbarg, D., Trudinger, N. (2001). Elliptic Partial Differential Equations of Second Order, Springer-Verlag.

\bibitem{MMP} Marcus, M., Mizel, V., Pinchover, Y. (1998). On the best constant for Hardy's
inequality in Rn . Trans. Amer. Math. Soc. 350, no. 8, 3237–3255

\bibitem{MN} Marcus, M., Nguyen, PT. (2019). Schrödinger equations with singular potentials: linear and nonlinear boundary value problems. Math. Ann. 374, 361–394.



\end{thebibliography}



\end{document}